\newtheorem{thm}{Theorem}[section]
\newtheorem{rema}[thm]{Remark}
\newtheorem{cor}[thm]{Corollary}
\newtheorem{lem}[thm]{Lemma}
\newtheorem{prop}[thm]{Proposition}
\newtheorem{defn}[thm]{Definition}
\theoremstyle{definition}
\numberwithin{equation}{section}
\newcommand{\K}{\mathbb K}
\newcommand{\R}{\mathbb R}
\newcommand{\N}{\mathbb N}
\begin{document}

%-----------------------------------------------------------------
\date{}
\title
{Linear family of Lie brackets on the space of matrices
$Mat(n\times m,\K)$ and Ado's Theorem}
\author[B. Dali]{B\'echir Dali }
\address{Department of Mathematics\\
Faculty of  sciences of Bizerte\\
7021 Zarzouna, Bizerte, Tunisia} \email{bechir.dali@yahoo.fr}

% ------------------------------------------------------------------------------------------------------------------------------

\begin{abstract}
In this paper we classify a linear family of Lie brackets on the
space of rectangular matrices $Mat(n\times m,\K)$ and we give an analogue of the Ado's Theorem.
We give also a similar classification on the algebra of the square matrices $Mat(n, \K)$ and as a
consequence, we prove that we can't built a
faithful representation of  the $(2n+1)$-dimensional Heisenberg
Lie algebra $\mathfrak{H}_n$ in a vector space $V$ with $\dim
V\leq n+1$. Finally, we prove that in the case of the algebra of
square matrices $Mat(n,\K)$, the corresponding Lie algebras
structures are a contraction of the canonical Lie algebra
$\mathfrak{gl}(n,\K)$.
\end{abstract}

\maketitle{Mathematics Subject Classification} (2000): 17B05,
11C20.

{Key words}: Lie algebras, Matrices, Extension, Contraction.
%--------------------------------------------------------------------------------------------------------------------------------

\bigskip
\section{Introduction}

We begin by setting some notations which will be used throughout
the paper. Let $\K$ be a field with characteristic $p=0$,
$Mat(n\times m,\K)$ be the linear space of $n\times m$ rectangular
matrices with coefficients in $\K$ and $Mat(n,\K)$ is the
associative algebra of square matrices with coefficients in $\K$.

When dealing with the problem of representation of finite
dimensional Lie algebras two presentations leap into mind: a
presentation by matrices and a presentation by an array of
structure constants. In the first presentation the Lie algebra is
given by a finite set of matrices $\{A_1,\dots, A_n\}$ that form a
basis of the Lie algebra $\frak g$. If A and B are two elements of
the space spanned by the $A_i$, then their Lie product is defined
as $ [A,~B]=A.B-B.A $ (where the $.$ denotes the ordinary matrix
multiplication). The second approach is more abstract. The Lie
algebra is a (abstract) vector space over a field $\K$ with basis
$\{x_1,\dots, x_n\}$ and the Lie multiplication is determined by
$$
[x_i, x_j]=\sum_{k=1}^nc_{i,j}^k x_k.
$$
Here $(c^k_{i,j})\in \K^3$ is an array of $n^3$ structure
constants that determines the Lie multiplication completely. By
linear algebra it is seen that from a presentation of a Lie
algebra by matrices, we easily can obtain a presentation of it by
structure constants. Ado's theorem states that the opposite
direction is also possible.

\begin{thm}\rm{(Ado)} Every finite-dimensional Lie algebra $\frak g$
over a field of characteristic zero can be viewed as a Lie algebra
of square matrices under the commutator bracket. More precisely
$\frak g$ has a linear representation $\rho$ over $\K$, on a
finite dimensional vector space $V$, that is a faithful
representation, making $\frak g$ isomorphic to a subalgebra of the
endomorphisms of $V$.
\end{thm}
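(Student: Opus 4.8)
The plan is to reduce Ado's theorem to a single stronger statement which is then proved by induction on $\dim\mathfrak{g}$: \emph{if $\mathfrak{n}$ denotes the nilradical of $\mathfrak{g}$, then $\mathfrak{g}$ admits a finite-dimensional representation $\rho$ such that every operator $\rho(x)$ with $x\in\mathfrak{n}$ is nilpotent and $\rho$ is injective on $\mathfrak{n}$.} Granting this, one concludes immediately: since $\ker(\mathrm{ad})=Z(\mathfrak{g})\subseteq\mathfrak{n}$, the representation $\rho\oplus\mathrm{ad}$ is finite-dimensional and faithful, which is exactly the theorem.

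First I would treat the nilpotent case $\mathfrak{g}=\mathfrak{n}$ by hand. Let $\omega\subset U(\mathfrak{g})$ be the augmentation ideal and $c$ the nilpotency class of $\mathfrak{g}$. By the Poincar\'e--Birkhoff--Witt theorem the quotient $V=U(\mathfrak{g})/\omega^{c+1}$ is finite-dimensional; the canonical map $\mathfrak{g}\to V$ is injective because in characteristic zero $\mathfrak{g}\cap\omega^{k}$ is the $k$-th term of the lower central series of $\mathfrak{g}$, which vanishes for $k=c+1$; and since $\mathfrak{g}\subseteq\omega$, left multiplication by any $x\in\mathfrak{g}$ is nilpotent on $V$. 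Thus the left regular action on $V$ is the required $\rho$.

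The induction then proceeds in two moves. \emph{Across a solvable layer:} the radical $\mathfrak{r}$ of $\mathfrak{g}$ satisfies $[\mathfrak{r},\mathfrak{r}]\subseteq\mathfrak{n}$, so one can choose a chain of ideals $\mathfrak{n}=\mathfrak{a}_0\subset\mathfrak{a}_1\subset\cdots\subset\mathfrak{a}_\ell=\mathfrak{r}$ with one-dimensional successive quotients and extend the representation one step at a time. For a single step $\mathfrak{a}_{i+1}=\mathfrak{a}_i+\K t$ the enlarged finite-dimensional module is built from the enveloping algebra of the nilradical: on the finite-dimensional algebra $A=U(\mathfrak{n})/\omega^{N}$, with $N$ larger than the nilpotency class of $\mathfrak{n}$, one has the two commuting structures of left multiplication by elements of $\mathfrak{n}$ and of the derivations induced by $\mathrm{ad}$; these operators span a finite-dimensional associative subalgebra of $\mathrm{End}(A)$ stable under $\mathrm{ad}(\mathfrak{g})$, and combining the derivation action of $\mathfrak{a}_{i+1}$ with the left regular action of $\mathfrak{n}$ yields a finite-dimensional representation of $\mathfrak{a}_{i+1}$ still nilpotent and injective on $\mathfrak{n}$. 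Iterating up the chain gives such a representation of $\mathfrak{r}$. \emph{Across a semisimple layer:} take a Levi decomposition $\mathfrak{g}=\mathfrak{r}\rtimes\mathfrak{s}$ with $\mathfrak{s}$ semisimple; complete reducibility of finite-dimensional $\mathfrak{s}$-modules (Weyl's theorem) together with the vanishing of the relevant low-degree Lie-algebra cohomology of $\mathfrak{s}$ (Whitehead's lemmas) allow one to extend a representation of the ideal $\mathfrak{r}$ to a representation of $\mathfrak{g}$ whose kernel still meets $\mathfrak{r}$ only where the original one did. A final direct sum with $\mathrm{ad}$ restores faithfulness on all of $\mathfrak{g}$.

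The hard part will be the solvable step: converting the module $A=U(\mathfrak{n})/\omega^{N}$ --- which a priori carries only an action of $\mathfrak{n}$, not of $\mathfrak{g}$ --- into a genuine finite-dimensional $\mathfrak{g}$-module while simultaneously preserving injectivity and nilpotency on the nilradical. This is what forces the careful choice of the truncation exponent $N$ and of the combined action; by contrast the Poincar\'e--Birkhoff--Witt bookkeeping, the cohomological extension over a semisimple quotient, and the final assembly via the adjoint representation are comparatively routine.
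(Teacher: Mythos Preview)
The paper does not prove this statement at all: Theorem~0.1 is quoted in the introduction as the classical Ado theorem, with no argument given, and is then invoked later as a black box (for instance in the proof of Theorem~3.4). So there is no ``paper's own proof'' to compare your attempt against.

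Your sketch follows the standard route (Birkhoff's argument for the nilpotent case via $U(\mathfrak{g})/\omega^{c+1}$, Zassenhaus-type extension through the solvable radical one codimension-one step at a time, then a Levi--Whitehead argument to handle the semisimple factor, and a final direct sum with $\mathrm{ad}$). The overall architecture is sound and the reduction to ``faithful and nilpotent on the nilradical'' is exactly right, since $Z(\mathfrak{g})\subseteq\mathfrak{n}$. One point to tighten: in the solvable step you describe left multiplication by $\mathfrak{n}$ and the derivations coming from $\mathrm{ad}$ as ``two commuting structures'' on $A=U(\mathfrak{n})/\omega^{N}$, but they do not commute; what one actually has is the identity $D\circ L_x - L_x\circ D = L_{D(x)}$ for a derivation $D$ and left multiplication $L_x$, and it is precisely this relation that lets the combined map $x\mapsto L_x$ (for $x\in\mathfrak{n}$) together with $t\mapsto$ (induced derivation) assemble into a Lie-algebra representation of $\mathfrak{a}_i+\K t$. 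If you rewrite that sentence as a compatibility rather than a commutativity, the solvable step goes through. The semisimple extension is also more delicate than ``Whitehead's lemmas'' alone suggests; one typically passes to a suitable quotient of $U(\mathfrak{r})$ that is simultaneously an $\mathfrak{s}$-module, rather than extending an arbitrary $\mathfrak{r}$-representation. But as a high-level plan your outline is the classical one and would be accepted as such; it simply goes far beyond what the paper itself supplies, which is nothing.
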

If a such representation of $\frak g$ is built on a vector space
$V$, we can request if we can built such  representation in a
lower dimensional vector space $V$.

In the other hand, let $\mathcal{A}$ be a finite-dimensional
associative algebra over a field $ \mathbb{K}$ of a characteristic
$p=0$, then $\mathcal{A}$ is canonically equipped with Lie algebra
structure given by the following bracket
$$
[u,~v]=u.v-v.u\quad u,~v\in\mathcal{A}.
$$
For instance, if $\mathcal{A}= End(V)$ the set of linear
transformations on $V$ where  $V$ is a finite dimensional vector
space over $\K$.  In equivalent way, if we consider $Mat(n,\K)$
the set of square matrices with coefficients in $\K$, then we get
a canonical Lie algebra structure with the above commutator
bracket, this algebra is denoted by $\mathfrak {gl}(n,\K)$.

However the structure of associative algebra over $\mathcal A$ is
not unique \cite{Y1}, indeed if we fix $w\in\mathcal{A}$ then we can define
the product $(u\circ v)_w=u.w.v$ and with respect $\mathcal A$ is
again an associative algebra. This induces a new Lie algebra
structure, defined by the bracket
$$
[u,~v]_w=u.w.v-v.w.u \quad \quad u,~v\in\mathcal{A}\quad \quad
(1).
$$
Thus we obtain a family of Lie brackets, labelled by the element
$w$. It is readily seen that we actually have a linear space of
Lie brackets, since the sum of two such brackets is also a Lie
bracket of the same type, and a natural question is to classify
these structures. For instance if $w=0$ then the Lie algebra
$(\mathcal A,[~,~]_0)$ is just the abelian Lie algebra $\K^{n^2}$.
The above construction can be applied to the algebra of square
matrices $Mat(n,\K)$. It can be applied also even if $u, v , w$
are not $n\times n$ matrices, since $(1)$ makes sense when $u,
v\in Mat(n\times m,\K)$-the linear space of $n\times m$ matrices
and $w\in Mat(m\times n,\K)$- the linear space of $m\times n$
matrices.

In this paper we will deal with the algebra of the endomorphisms
of a finite-dimensional vector space $V$ or in equivalent way, the
algebra of square matrices $Mat(n,\K)$ and we will deal also with
$Mat(n\times m,\K)$-the linear space of $n\times m$ matrices.

Our first goal in this paper is to give a complete
classification of these Lie algebra
structures, on $Mat(n,\K)$ (respectively on $Mat(n\times m,\K)$),
labelled by the Lie brackets $~[~,~]_J$ with
$J\in Mat(n,\K)$ (respectively in $Mat(m\times n,\K)$).

The paper is organized as follows. First we begin by recalling
some classical results of linear algebra and matrix properties.
Secondly, we classify the Lie algebra structures on $Mat(n,\K)$ and
$Mat(n\times m,\K)$. As a consequence of these classifications, we prove an analogue of
the Ado's Theorem, namely, we prove that any finite dimensional Lie algebra
$\mathfrak g$ can be viewed as a Lie sub-algebra $(Hom(\K^m,\K^n), [~,~]_j)$ for some
positive integers $n,m$ and some matrix $J\in Mat(m\times n,\K)$. we
prove also that there is no faithful linear representation of
the classical $(2n+1)$-Heisenberg Lie algebra $\mathfrak{H}_{2n+1}$ in
$\mathfrak{gl}(V)$ with $\dim V\leq n+1$. Finally we prove that in the
case of the algebra of square matrices, the Lie algebra structure
labelled by the family of brackets $([~,~]_J)$ constructed on $Mat(n,\K)$
are a contraction of the canonical structure $\mathfrak{gl}(n,\K)$.

\section{Preliminaries}

Let $n, m$ be integers in $ \mathbb{N}^*=\mathbb{N}\setminus\{
0\}$ and let $Mat(n\times m,\K)$ be the linear vector space of
$n\times m$ rectangular matrices. We denote its canonical basis
$(E_{i,j})_{1\leq i\leq n,1\leq j\leq m}$ with
$$
E_{i,j}=(\delta_{p,i}\delta_{q,j})_{1\leq p\leq n,1\leq q\leq m},
$$
where $\delta_{p,i}$ is the Kronecker symbol.

\begin{defn}
Let $J$ and $J'$ be in $Mat(m\times n,\K)$. We say that $J$ is
equivalent to $J'$ if and only if there exist two invertible
matrices $P\in GL(n,\K)$ and $Q\in GL(m,\K)$ such that
$$
J=QJ'P.
$$
\end{defn}

\begin{rema}
(1) The  matrices $Q$ and $P$ are not unique.

(2) The above property of equivalent matrices in $Mat(m\times
n,\K)$ is an equivalence relation.

(3) $J$ and $J'$ are equivalent if and only if they represent the same
endomorphism in different bases.
\end{rema}

The  characterization of two equivalent matrices is given by the
 following

\begin{lem}
Let $J, J'\in Mat(m\times n, \K)$. $J$ and $J'$ are equivalent if
and only if they have the same rank.
\end{lem}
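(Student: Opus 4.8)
This is the classical statement that the rank is a complete invariant for the equivalence of rectangular matrices; the two implications are proved separately.

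For the direct implication, assume $J=QJ'P$ with $P\in GL(n,\K)$ and $Q\in GL(m,\K)$. Reading $J$ and $J'$ as linear maps $\K^n\to\K^m$ and $P,Q$ as the automorphisms they induce on $\K^n$ and $\K^m$, we have $J=Q\circ J'\circ P$. Since $P$ is surjective, the image $J(\K^n)$ equals $Q\bigl(J'(\K^n)\bigr)$, and since $Q$ is an isomorphism this subspace has the same dimension as $J'(\K^n)$. Hence $J$ and $J'$ have the same rank.

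For the converse, by part (2) of the Remark the equivalence of matrices is an equivalence relation; so it suffices to exhibit, for each integer $r$ with $0\le r\le\min(m,n)$, one matrix $N_r\in Mat(m\times n,\K)$ to which every $J$ of rank $r$ is equivalent. Any two matrices of rank $r$ are then both equivalent to $N_r$, hence to one another by transitivity. I take $N_r$ to be the $m\times n$ matrix whose first $r$ diagonal entries equal $1$ and all of whose other entries are $0$. Given $J$ of rank $r$, choose a basis $(v_1,\dots,v_n)$ of $\K^n$ such that $(v_{r+1},\dots,v_n)$ is a basis of $\ker J$ (which has dimension $n-r$ by the rank--nullity theorem); then $(Jv_1,\dots,Jv_r)$ is linearly independent in $\K^m$, and one completes it to a basis $(w_1,\dots,w_m)$ of $\K^m$ with $w_i=Jv_i$ for $1\le i\le r$. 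Let $P\in GL(n,\K)$ be the matrix whose columns are $v_1,\dots,v_n$ and $Q\in GL(m,\K)$ the matrix whose columns are $w_1,\dots,w_m$; comparing columns gives $JP=QN_r$, i.e.\ $J=QN_rP^{-1}$, so $J$ is equivalent to $N_r$. (Equivalently, this is Gaussian elimination: successive left and right multiplications of $J$ by elementary, hence invertible, matrices bring it to the normal form $N_r$, the number of pivots being the rank.)

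The argument is routine and presents no real obstacle: the direct implication is merely the invariance of dimension under an isomorphism, and the converse is the standard reduction to rank normal form. The only mild point to watch is to route the comparison through the fixed representative $N_r$ and use transitivity of the equivalence relation, rather than trying to link $J$ and $J'$ directly.
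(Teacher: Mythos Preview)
Your proof is correct and is the standard argument. Note, however, that the paper does not actually prove this lemma: it is stated as a classical linear-algebra fact without proof, and the subsequent Lemma (the rank normal form) is deduced from it. Your write-up therefore supplies more detail than the paper itself, but the content---invariance of rank under invertible left/right multiplication, and reduction to the normal form $N_r$ via suitable bases---is exactly the well-known result the paper is invoking.
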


From the above Lemma, we can conclude the following

\begin{lem}
Let $J$ a matrix in $Mat(m\times n,\K)$ of rank $r\leq min (m,n)$,
then there exist two invertible matrices  $Q\in GL(m,\K)$ and
$P\in GL(n,\K)$ such that
$$
J=Q\begin{pmatrix}
I_r&0_{r,n-r}\\
0_{m-r,r}&0_{m-r,n-r}
\end{pmatrix}P,
$$
where $0_{p,q}$ is
the zero matrix of $Mat(p\times q,\K)$ and $I_r$ is the identity matrix of $Mat(r,\K)$.
\end{lem}

We will denote the matrix $\displaystyle{\begin{pmatrix}
I_r&0_{r,n-r}\\
0_{m-r,r}&0_{m-r,n-r}
\end{pmatrix}}$ by $J_{m,n,r}$
and when $n=m$ we simply denote it by $J_{n,r}$. Note that
$J_{n,n}=I_n$ the identity matrix.

Consider now the linear space $Mat(n\times m,\K)$ and for $J\in
Mat(m\times n,\mathbb{K})$ put
$$
[A,B]_J=AJB-BJA,\quad A,B\in Mat(n\times m,\mathbb{K}),
$$ then we have

\begin{lem}
(i)$ [A,B]_{J+\alpha J'}=[A,B]_J+\alpha[A,B]_{J'}, \forall
\alpha\in \mathbb{K}, J~,J'\in Mat(n\times m,\mathbb{K}).$

(ii) $\left(Mat(n\times m,\mathbb{K},[~,~]_J\right)$ is a Lie algebra.
\end{lem}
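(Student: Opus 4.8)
The plan is to verify the two claims of the final Lemma directly from the definition of the bracket $[A,B]_J = AJB - BJA$, since both are essentially formal manipulations.

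For part (i), I would simply substitute $J + \alpha J'$ into the formula:
\[
[A,B]_{J+\alpha J'} = A(J+\alpha J')B - B(J+\alpha J')A = (AJB - BJA) + \alpha(AJ'B - BJ'A) = [A,B]_J + \alpha [A,B]_{J'},
\]
using only bilinearity of ordinary matrix multiplication and the fact that scalars commute with matrices over $\K$. Note this requires $A, B \in Mat(n\times m,\K)$ and $J, J' \in Mat(m\times n,\K)$ so that all the triple products $AJB$, $BJA$, etc., are well-defined elements of $Mat(n\times m,\K)$; the products $AJ$ and $JB$ have compatible sizes precisely because $J$ is $m\times n$.

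For part (ii), I would check the three axioms of a Lie algebra on the vector space $Mat(n\times m,\K)$. Bilinearity of $(A,B) \mapsto [A,B]_J$ follows from bilinearity of matrix multiplication exactly as in part (i). Antisymmetry (equivalently, $[A,A]_J = 0$) is immediate since $AJA - AJA = 0$. The only real content is the Jacobi identity: one must show
\[
[[A,B]_J, C]_J + [[B,C]_J, A]_J + [[C,A]_J, B]_J = 0.
\]
I would expand each term using the definition — for instance $[[A,B]_J, C]_J = (AJB - BJA)JC - CJ(AJB - BJA) = AJBJC - BJAJC - CJAJB + CJBJA$ — and then observe that the twelve resulting monomials cancel in pairs by the cyclic structure, just as in the standard proof that an associative algebra becomes a Lie algebra under the commutator. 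Indeed, the cleanest way is to recall the remark already made in the introduction: fixing $w = J$, the product $A \circ B := AJB$ makes $Mat(n\times m,\K)$ into an associative algebra (associativity being $(AJB)JC = AJ(BJC)$, which is just associativity of matrix multiplication), and $[A,B]_J$ is precisely the commutator bracket of this associative algebra; hence the Jacobi identity holds automatically. I expect no genuine obstacle here — the main point to be careful about is bookkeeping of matrix dimensions so that every product written down actually makes sense, and invoking the well-known fact that commutators in any associative algebra satisfy Jacobi rather than re-deriving it by hand.
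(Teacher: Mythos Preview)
Your proposal is correct and matches the paper's approach, which simply states that the proof is ``a simple verification of the anti-symmetry of the bracket $[~,~]_J$ and the Jacobi identity.'' Your write-up is in fact more detailed than the paper's, and your observation that the products are well-defined only when $J,J'\in Mat(m\times n,\K)$ (rather than $Mat(n\times m,\K)$ as printed) catches a typo in the statement.
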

The proof is a simple verification of the anti-symmetry  of the
bracket $[~,~]_J$ and the Jacobi identity.

\noindent
Now we give the following
\begin{prop}\label{Equivalent matrices} If $J$ and $J'$ are equivalent matrices in
$Mat(m\times n, \mathbb K)$, then the corresponding Lie algebra structures
on $Mat(n\times m,\mathbb{K})$ are isomorphic.
\end{prop}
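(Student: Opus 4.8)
The plan is to convert the equivalence $J = Q J' P$ into an explicit isomorphism between the two Lie algebras. Since $J$ and $J'$ are equivalent, there exist $Q \in GL(m,\K)$ and $P \in GL(n,\K)$ with $J = Q J' P$. I would define a map $\Phi \colon Mat(n\times m,\K) \to Mat(n\times m,\K)$ by $\Phi(A) = P A Q$ (note the dimensions: $P$ is $n\times n$, $A$ is $n \times m$, $Q$ is $m \times m$, so $PAQ$ is again $n\times m$). Because $P$ and $Q$ are invertible, $\Phi$ is a linear bijection, with inverse $\Phi^{-1}(B) = P^{-1} B Q^{-1}$. So the only thing to check is that $\Phi$ intertwines the two brackets, i.e. $\Phi([A,B]_{J'}) = [\Phi(A), \Phi(B)]_J$.

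The verification is a direct computation. On the one hand $[\Phi(A),\Phi(B)]_J = (PAQ) J (PBQ) - (PBQ) J (PAQ)$. Substituting $J = Q J' P$ gives
$(PAQ)(QJ'P)(PBQ) - (PBQ)(QJ'P)(PAQ)$. This doesn't immediately collapse, so I need $\Phi$ to be built from the right matrices. The correct choice is $\Phi(A) = P^{-1} A Q^{-1}$: then $\Phi([A,B]_{J'}) = P^{-1}(AJ'B - BJ'A)Q^{-1}$, while $[\Phi(A),\Phi(B)]_J = P^{-1}AQ^{-1} J P^{-1}BQ^{-1} - (\text{swap})$, and substituting $J = QJ'P$ makes the middle factor $Q^{-1}(QJ'P)P^{-1} = J'$, yielding $P^{-1}AJ'BQ^{-1} - P^{-1}BJ'AQ^{-1}$, which matches. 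So $\Phi(A) = P^{-1}AQ^{-1}$ is the desired isomorphism, and the whole proof is this substitution together with the observation that $\Phi$ is linear and invertible.

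The main obstacle is essentially bookkeeping: getting the sides of the multiplication and the placement of $P, Q$ versus $P^{-1}, Q^{-1}$ consistent so that the cancellation $Q^{-1} Q = I$ and $P P^{-1} = I$ occurs in the middle of each product. There is no conceptual difficulty — Lemma (the Jacobi identity one) already tells us both sides are Lie algebras, so once the bracket-intertwining identity is verified, $\Phi$ is an isomorphism of Lie algebras by definition. I would also remark, as a sanity check via Remark (3), that an equivalence of $J$ with $J'$ corresponds to a change of bases on $\K^n$ and $\K^m$, under which the bilinear map $(A,B) \mapsto AJB$ transforms exactly as claimed; this gives a basis-free way of seeing why the isomorphism must exist, but the explicit $\Phi$ is cleaner to write down and check.
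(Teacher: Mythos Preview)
Your proof is correct and follows essentially the same approach as the paper: exhibit the conjugation map $A \mapsto P^{\pm 1} A Q^{\pm 1}$ and check directly that the substitution $J = QJ'P$ makes the brackets match. The only cosmetic difference is direction: the paper takes $\varphi(A) = PAQ$ and verifies $\varphi([A,B]_J) = [\varphi(A),\varphi(B)]_{J'}$, whereas you ended up with the inverse map $\Phi = \varphi^{-1}$ going the other way --- your initial ``false start'' $\Phi(A) = PAQ$ would in fact have worked had you checked $\Phi([A,B]_J) = [\Phi(A),\Phi(B)]_{J'}$ instead of the reversed identity.
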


\begin{proof}
Since $J$ and $J'$ are equivalent in $Mat(m\times n, \K)$,
then there exist two invertible
matrices $Q\in GL(m,\K)$ and $P\in GL(n,\K)$ such that $ J=QJ'P.$ Consider now the map
$$
\begin{array}{lll}
\varphi:& Mat(\times m,\mathbb{K})&\longrightarrow Mat(n\times m,\mathbb{K})\\
       &A   &  \longmapsto PAQ\\\end{array}%
$$
we verify, since $Q$ and $P$ are invertible, that $\varphi$ is an
isomorphism of vector space and
$$
\aligned
\varphi\left([ A,B]_{J}\right)&=P(AJB-BJA)Q \\
                              &=P(AQJ'PB-BQJ'PA)Q\\
                              &=[PAQ,PBQ]_{J'}\\
                              &=[\varphi(A),\varphi(B)]_{J'}
\endaligned
$$
and thus $\varphi$ is a Lie algebra isomorphism.
\end{proof}
Let $J\in Mat(m\times n,\K)$ with $rk(J)=r$, from Proposition
\ref{Equivalent matrices}, $J$ is equivalent to the fllowing matrix
$\displaystyle{\begin{pmatrix}
I_r&0_{r,n-r}\\
0_{m-r,r}&0_{m-r,n-r}
\end{pmatrix}}$, we shall denote the last matrix by $J_{m,n,r}$. In the case
when $m=n$, we simply denote it by $J_{n,r}$. Recall that $J_{n,n}=I_n$, the
identity matrix of $Mat(n,\mathbb K)$.

\noindent For easy of notations, we shall denote the Lie algebra
$\bigl(Mat(n\times m,\mathbb K),[~,~]_{J_{m,n,r}}\bigr)$ by
$\mathfrak{gl}(n,m,r,\mathbb K)$ and in the case when $m=n$, we simply
denotes the lie algebra $\bigl(Mat(n,\mathbb K),[~,~]_{J_{n,r}}\bigr)$
by $\mathfrak{gl}(n,r,\mathbb K)$.
In order to give a complete classification of these Lie algebras structures labelled by the
linear family of Lie brackets $([~.~]_J)$, we first give the following
\begin{lem}\label{bracket}
Let $M\in Mat(n\times m,\K)$ and put $ M=\displaystyle{\begin{pmatrix}
M_1&M_3\\
M_2&M_4
\end{pmatrix}}$
with $ M_1\in Mat(r,\mathbb{K})$, $M_2\in Mat((n-r)\times
r,\mathbb{K}), M_3\in Mat(r\times(m-r),\mathbb{K}), M_4\in
Mat((n-r)\times (m-r),\mathbb{K})$. Then the Lie bracket $[A,B]_{J_{m,n,r}}$ is
given by
$$
[A,B]_{J_{n,r}}=\begin{pmatrix}[A_1,B_1]&A_1B_3-B_1A_3&\\
A_2B_1-B_2A_1&A_2B_3-B_2A_3)\end{pmatrix}.
$$
\end{lem}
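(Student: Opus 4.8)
The plan is a direct block-matrix computation. First I would decompose $A$ and $B$ exactly as $M$ is decomposed in the statement,
$$
A=\begin{pmatrix}A_1&A_3\\ A_2&A_4\end{pmatrix},\qquad B=\begin{pmatrix}B_1&B_3\\ B_2&B_4\end{pmatrix},
$$
with $A_1,B_1\in Mat(r,\K)$, $A_2,B_2\in Mat((n-r)\times r,\K)$, $A_3,B_3\in Mat(r\times(m-r),\K)$ and $A_4,B_4\in Mat((n-r)\times(m-r),\K)$, and recall that
$$
J_{m,n,r}=\begin{pmatrix}I_r&0_{r,n-r}\\ 0_{m-r,r}&0_{m-r,n-r}\end{pmatrix}\in Mat(m\times n,\K).
$$

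The key observation is that right multiplication by $J_{m,n,r}$ retains the first $r$ columns of $A$ and annihilates the last $m-r$ ones: the column blocks of $A$, of sizes $r$ and $m-r$, match the row blocks of $J_{m,n,r}$, so block multiplication gives
$$
AJ_{m,n,r}=\begin{pmatrix}A_1&0_{r,n-r}\\ A_2&0_{n-r,n-r}\end{pmatrix}\in Mat(n\times n,\K),
$$
and similarly for $B$. Multiplying on the right by $B$, whose row blocks of sizes $r$ and $n-r$ match the column blocks of $AJ_{m,n,r}$, one obtains
$$
AJ_{m,n,r}B=\begin{pmatrix}A_1B_1&A_1B_3\\ A_2B_1&A_2B_3\end{pmatrix},\qquad BJ_{m,n,r}A=\begin{pmatrix}B_1A_1&B_1A_3\\ B_2A_1&B_2A_3\end{pmatrix}.
$$
Subtracting the second matrix from the first and writing $[A_1,B_1]=A_1B_1-B_1A_1$ in the $(1,1)$ block yields the asserted formula for $[A,B]_{J_{m,n,r}}=AJ_{m,n,r}B-BJ_{m,n,r}A$.

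There is no genuine obstacle here: the argument is a routine verification of the same flavour as the remark following Lemma~\ref{bracket}'s predecessor on the Jacobi identity. The only point requiring attention is the bookkeeping of block dimensions, so that every displayed product is well defined and the output lies in $Mat(n\times m,\K)$; a cosmetic remark is that the subscript $J_{n,r}$ appearing on the right-hand side of the displayed formula should read $J_{m,n,r}$, in agreement with the left-hand side and with the rectangular setting where $n\neq m$.
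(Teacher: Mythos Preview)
Your proof is correct and follows essentially the same route as the paper: the paper's proof records the identities $AJ_{m,n,r}=\begin{pmatrix}A_1&0\\A_2&0\end{pmatrix}$ and $J_{m,n,r}A=\begin{pmatrix}A_1&A_3\\0&0\end{pmatrix}$ and leaves the remaining block multiplication and subtraction implicit, whereas you carry that multiplication out explicitly. Your remark about the subscript typo $J_{n,r}$ versus $J_{m,n,r}$ is also accurate.
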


\begin{proof}The proof is based on the following result. Put
$ A=\displaystyle{\begin{pmatrix}
A_1&A_3\\
A_2&A_4
\end{pmatrix}},$
with $ A_1\in Mat(r,\mathbb{K})$, $A_2\in Mat((n-r)\times
r,\mathbb{K}),A_3\in Mat((m-r)\times r,\mathbb{K}), A_4\in
Mat((n-r)\times (m-r),\mathbb{K})$. Then we have
$$
AJ_{m,n,r}=\begin{pmatrix}
A_1&0\\
A_2&0
\end{pmatrix}\hbox { and }
J_{m,n,r}A=\begin{pmatrix}
A_1&A_3\\
0&0
\end{pmatrix}.
$$
\end{proof}

In order to give a classification of the Lie algebras structures
on $Mat(n\times m,\mathbb K)$ labelled by the family of Lie brackets
$\bigl([~,~]_J\bigr)_J$, we shall distinguish the two possible cases
$n=m$ and $n\neq m$.

\section{The case of $Mat(n,\K)$}\label{square}

In this section we will deal with the linear vector space of
square matrices $Mat(n,\K)$. It is well known that it has a
structure of associative algebra and thus a Lie algebra with the
commutator
$$
[A,B]=AB-BA, \quad A, B\in Mat(n,\K).
$$
The commutator of two matrices $E_{i,j}$ and $E_{k,\ell}$ of the
canonical basis ( $1\leq i,j,k,l\leq n$) is given by
$$
[E_{i,j},E_{k,\ell}]=\delta_{j,k}E_{i,\ell}-\delta_{\ell,j}E_{k,j}.
$$
where $\delta$ denotes the Kronecker symbol.

Fix $J\in Mat(n,\K)$ and consider the bracket
$$
[A,B]_J=AJB-BJA, \qquad A, B\in Mat(n,\K).
$$
We first give the following

\begin{lem} Let $\mathfrak h$ be a Lie sub-algebra of $\mathfrak{gl}(n,\K)$, then
$\mathfrak h$ is a Lie sub-algebra of $\mathfrak{gl}(p,n,\K)$ for any integer $p\geq n$.
\end{lem}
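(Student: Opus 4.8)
The plan is to realize $\mathfrak h$ inside $\mathfrak{gl}(p,n,\K)$ via the obvious ``padding with zeros'' map, and then to check, by a direct appeal to Lemma \ref{bracket}, that this map intertwines the ordinary commutator on $Mat(n,\K)$ with the bracket $[~,~]_{J_{p,n}}$ on $Mat(p,\K)$. Recall that here $\mathfrak{gl}(p,n,\K)$ stands for the Lie algebra $\bigl(Mat(p,\K),[~,~]_{J_{p,n}}\bigr)$, where $J_{p,n}=J_{p,p,n}$ is the $p\times p$ matrix having the identity block $I_n$ in the upper left corner and zeros elsewhere; this requires $p\geq n$, which is our hypothesis.

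First I would introduce the map
$$
\iota:Mat(n,\K)\longrightarrow Mat(p,\K),\qquad \iota(A)=\begin{pmatrix}A&0_{n,p-n}\\0_{p-n,n}&0_{p-n,p-n}\end{pmatrix},
$$
which is plainly linear and injective. Next I would apply Lemma \ref{bracket} with $n$ replaced by $p$, $m$ replaced by $p$, and the rank $r$ replaced by $n$: in the block notation of that lemma the matrix $\iota(A)$ has blocks $A_1=A$, $A_2=0$, $A_3=0$, $A_4=0$, and similarly for $\iota(B)$. Substituting into the bracket formula of Lemma \ref{bracket} yields
$$
[\iota(A),\iota(B)]_{J_{p,n}}=\begin{pmatrix}[A,B]&0\\0&0\end{pmatrix}=\iota\bigl([A,B]\bigr),
$$
so $\iota$ is a homomorphism of Lie algebras from $\mathfrak{gl}(n,\K)$ to $\mathfrak{gl}(p,n,\K)$. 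Being injective, $\iota$ identifies $\mathfrak{gl}(n,\K)$ with a Lie subalgebra of $\mathfrak{gl}(p,n,\K)$, and consequently it identifies the Lie subalgebra $\mathfrak h\subseteq\mathfrak{gl}(n,\K)$ with the Lie subalgebra $\iota(\mathfrak h)$ of $\mathfrak{gl}(p,n,\K)$, which is exactly the assertion.

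There is essentially no genuine obstacle here: all the computational content is already packaged in Lemma \ref{bracket}. The only points deserving a word of care are the bookkeeping of which index of $J_{p,p,n}$ plays the role of the rank $r$ in that lemma, and the (harmless) abuse of language by which a Lie subalgebra of $Mat(n,\K)$ is viewed as sitting inside $Mat(p,\K)$ through the embedding $\iota$.
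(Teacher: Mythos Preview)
Your proof is correct and follows essentially the same approach as the paper. The paper's own argument is a one-line assertion that $[\mathfrak h,\mathfrak h]_{J_{p,n}}=[\mathfrak h,\mathfrak h]$ for any subset $\mathfrak h\subset Mat(n,\K)$ once $\mathfrak h$ is (implicitly) placed in the upper-left corner of $Mat(p,\K)$; you have simply made the embedding $\iota$ and the appeal to Lemma~\ref{bracket} explicit, which is exactly the justification that line is resting on.
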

\begin{proof}
For any subset $\mathfrak h\subset Mat(n,\K)$, and $p\geq n$, we have
$$
[\mathfrak h,\mathfrak h]_{J_{p,n}}=[\mathfrak h,\mathfrak h].
$$
This completes the proof.
\end{proof}

In order to classify the Lie algebras structures on $Mat (n,\K)$
labelled by the family of brackets $[~,~]_J$, we consider a vector space
$V$ over $\K$ with $\dim V=n$ and put
$V=V_1\oplus V_2$. Next, define $\mathfrak n$ to be the two step nilpotent
Lie algebra constructed on the vector space $Hom(V_1,V_2)\oplus
Hom(V_2,V_1)\oplus Hom(V_2,V_2)$ equipped with the bracket
$$
[(A,B,C),(A',B',C')]=(0,0,AB'-A'B).
$$
Finally, we define the Lie algebra $S(V_1,V_2)$ to be the semi direct
product of $End(V_1)$ with $\mathfrak n$. Then we have the
following

\begin{prop}\label{split}
\noindent (i) The Lie algebra $S(V_1,V_2)$ is isomorphic to
$\mathfrak{gl}(n,r,\K)$.

\noindent (ii) $End(V_1)$ is a Lie subalgebra of $\mathfrak{gl}(n,s,\K)$
for any $s\geq r$.

\noindent (iii) The reductive part of $\mathfrak{gl}(n,r,\K)$ is $End(V_1)\oplus End (V_2)$.
\end{prop}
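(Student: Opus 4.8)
Throughout write $r=\dim V_1$ and $n-r=\dim V_2$, and split each $M\in Mat(n,\K)$ into blocks $M=\begin{pmatrix}M_1&M_3\\M_2&M_4\end{pmatrix}$ with $M_1\in End(V_1)$, $M_2\in Hom(V_1,V_2)$, $M_3\in Hom(V_2,V_1)$, $M_4\in Hom(V_2,V_2)$. By Lemma~\ref{bracket} (with $m=n$) the bracket of $\mathfrak{gl}(n,r,\K)$ reads, in these blocks,
$$
[M,N]_{J_{n,r}}=\begin{pmatrix}[M_1,N_1]&M_1N_3-N_1M_3\\M_2N_1-N_2M_1&M_2N_3-N_2M_3\end{pmatrix};
$$
the point to notice is that the block $M_4$ (i.e.\ $Hom(V_2,V_2)$) never occurs on the right, so $Hom(V_2,V_2)$ is central in $\mathfrak{gl}(n,r,\K)$.

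For (i) the plan is to write the isomorphism down explicitly. Let $End(V_1)$ act on $\mathfrak n$ by $X\cdot(A,B,C)=(-AX,\,XB,\,0)$; a one-line check shows this is an action by derivations (both sides of the Leibniz rule vanish, since the action kills the $Hom(V_2,V_2)$-slot where the bracket of $\mathfrak n$ lives), so $S(V_1,V_2)=End(V_1)\ltimes\mathfrak n$ really is this semidirect product. Define
$$
\varphi:S(V_1,V_2)\longrightarrow\mathfrak{gl}(n,r,\K),\qquad \varphi\bigl(X,(A,B,C)\bigr)=\begin{pmatrix}X&B\\A&C\end{pmatrix}.
$$
This is patently a linear isomorphism (both spaces are $n^2$-dimensional). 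The one thing left is to check that $\varphi$ intertwines the brackets; expanding $[(X,m),(X',m')]$ from the semidirect-product rule and comparing with the displayed block bracket, the four blocks match termwise. I do not expect any real difficulty here beyond keeping track of signs in the action.

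For (ii), view $End(V_1)$ as the top-left $r\times r$ corner of $Mat(n,\K)$. When $r\le s\le n$ the matrix $J_{n,s}$ restricts to the identity on the first $s$ coordinates, hence fixes (on both sides) every matrix supported in the top-left $r\times r$ block; so $AJ_{n,s}B=AB$ for such $A,B$, and $[A,B]_{J_{n,s}}=AB-BA$. Thus $End(V_1)$ is a Lie subalgebra of $\mathfrak{gl}(n,s,\K)$ with the ordinary commutator. (Alternatively, apply (i) with a decomposition $V=W_1\oplus W_2$, $\dim W_1=s$: then $\mathfrak{gl}(n,s,\K)\cong End(W_1)\ltimes\mathfrak n$ and $End(V_1)\subseteq End(W_1)$ since $r\le s$.)

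For (iii) the strategy is: locate the nilradical, then identify the block-diagonal subalgebra as a maximal reductive subalgebra. The subspace $\mathfrak n=Hom(V_1,V_2)\oplus Hom(V_2,V_1)\oplus Hom(V_2,V_2)$ is an ideal, its bracket lands in the central block $Hom(V_2,V_2)$, so it is $2$-step nilpotent; moreover $\mathfrak{gl}(n,r,\K)/\mathfrak n\cong\mathfrak{gl}(r,\K)$, whose nilradical is just $\K I_r$, while $\mathrm{ad}\,J_{n,r}$ acts by $-1$ on $Hom(V_1,V_2)$ and is not nilpotent --- so $\mathfrak n$ is exactly the nilradical. The block bracket shows the block-diagonal matrices $\mathfrak d=\bigl\{\begin{pmatrix}M_1&0\\0&M_4\end{pmatrix}\bigr\}$ form a subalgebra with $\bigl[\begin{pmatrix}M_1&0\\0&M_4\end{pmatrix},\begin{pmatrix}N_1&0\\0&N_4\end{pmatrix}\bigr]_{J_{n,r}}=\begin{pmatrix}[M_1,N_1]&0\\0&0\end{pmatrix}$, so $\mathfrak d$, as a Lie algebra, is $\mathfrak{gl}(r,\K)$ together with the abelian summand $Hom(V_2,V_2)$ --- hence reductive --- and, as a vector space, $\mathfrak d=End(V_1)\oplus End(V_2)$; this is the stated equality, the caveat being that inside $\mathfrak{gl}(n,r,\K)$ the factor $End(V_2)=Hom(V_2,V_2)$ is central rather than a copy of $\mathfrak{gl}(n-r,\K)$. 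To see that $\mathfrak d$ deserves to be called \emph{the} reductive part I would argue: any semisimple subalgebra injects into $\mathfrak{gl}(n,r,\K)/\mathrm{rad}\cong\mathfrak{sl}(r,\K)$, so a Levi factor is this $\mathfrak{sl}(r,\K)$; a short computation shows its centralizer in $\mathfrak{gl}(n,r,\K)$ equals $\K J_{n,r}\oplus Hom(V_2,V_2)$, which is abelian, so $\mathfrak{sl}(r,\K)$ together with its centralizer is exactly $\mathfrak d$; since all maximal reductive subalgebras are conjugate, $\mathfrak d$ is the reductive part. The main obstacle is precisely this last step --- making ``the reductive part'' precise, identifying the Levi factor, computing the centralizer, and invoking conjugacy of maximal reductive subalgebras; everything else reduces to block manipulations with the bracket of Lemma~\ref{bracket}.
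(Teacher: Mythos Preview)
Your arguments for (i) and (ii) coincide with the paper's: the same explicit map $\varphi$ and the same action of $End(V_1)$ on $\mathfrak n$ for (i), and the observation that the bracket $[~,~]_{J_{n,s}}$ restricts to the ordinary commutator on the top-left $r\times r$ block for (ii) (the paper just cites Lemma~\ref{bracket}).

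For (iii) you take a genuinely different route. The paper argues directly: it takes a maximal reductive subalgebra $\mathfrak g$ containing $End(V_1)$, writes $\mathfrak g=\mathfrak z\oplus[\mathfrak g,\mathfrak g]$, and uses two block computations --- first that $[Z,X]_{J_{n,r}}=0$ for all $X\in End(V_1)$ forces $Z=\begin{pmatrix}\lambda I_r&0\\0&Z_4\end{pmatrix}$, and second that $[Z,A]_{J_{n,r}}=0$ for such $Z$ and $A\in\mathfrak g$ forces the off-diagonal blocks of $A$ to vanish --- to conclude $\mathfrak g\subseteq End(V_1)\oplus End(V_2)$. Your approach instead identifies the nilradical, pins down the Levi factor as $\mathfrak{sl}(r,\K)$, computes its centralizer, and invokes conjugacy of maximal reductive subalgebras. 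Your route is more structural and makes the role of the Levi decomposition explicit, at the cost of importing heavier machinery (Mostow-type conjugacy) and needing the extra step --- which you flag --- of showing that ``Levi plus centralizer of Levi'' is indeed maximal reductive (this follows because any semisimple subalgebra injects into $\mathfrak g/\mathrm{rad}\cong\mathfrak{sl}(r,\K)$, so the derived algebra of any reductive $\mathfrak h\supseteq\mathfrak d$ must equal $\mathfrak{sl}(r,\K)$, and then $Z(\mathfrak h)$ lies in the centralizer you computed). The paper's argument is more self-contained and stays at the level of block manipulations, but is terser about why the two commutation constraints suffice.
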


\begin{proof}

\noindent (i) First, we can simply verify that $\mathfrak n$ is a two-step
nilpotent Lie algebra. Now for each $X\in End(V_1)$, consider
$$
\aligned
\pi_X:\  \mathfrak n &\longrightarrow \mathfrak n \\
           (A,B,C)& \longmapsto  (-AX,XB,0).
\endaligned
$$
Then we easily verify that $ X\longmapsto \pi_X$ is a Lie algebra
homomorphism from $\mathfrak{gl}(n,\K)$ into ${\partial}
(\mathfrak n)$ the Lie algebra of the derivations of $\mathfrak
n$.

\noindent Now consider the mapping
$$
\aligned
\phi:\  S(V_1,V_2) &\longrightarrow \mathfrak{gl}(n,r,\K) \\
           (X,A,B,C)& \longmapsto  \begin{pmatrix}
X&B\\
A&C
\end{pmatrix}.
\endaligned
$$
$\phi$ is a vector space isomorphism.

\noindent Let $X, X'\in End(V_1)$ and $(A,B,C),(A',B',C')\in
Hom(V_1,V_2)\oplus
Hom(V_2,V_1)\oplus Hom(V_2,V_2)$. Put $n=(A,B,C), n'=(A',B',C')$ then
$$
\begin{aligned}
\phi([(X,n), (X',n')])&=\phi(([X,X'],[n,n']+\pi_{X'}(n)-\pi_X(n')))\\
                &=[\phi((X,n)), \phi((X',n'))]_{J_{n,r}}.
\end{aligned}
$$

\noindent (ii) The statement is a simple consequence of Lemma \ref{bracket}.

\noindent (iii) It is easy to verify that $End(V_1)$ is reductive in $\mathfrak{gl}(n,r,\K)$.
Now, let $\mathfrak g$ be a reductive subalgebra in $\mathfrak{gl}(n,r,\K)$ which is maximal (for the inclusion).
From \cite{IT} we can write $\mathfrak g= \mathfrak z\oplus [\mathfrak g,\mathfrak g]$, where $\mathfrak z$ is the center
of $\mathfrak g$.
Since $End(V_1)\subset \mathfrak g$ then we must have
$[Z, X]=0,$ for any $Z\in\mathfrak z$ and $X\in End(V_1)$.

\noindent Put $Z=\displaystyle{\begin{pmatrix}
Z_1&Z_3\\
Z_2&Z_4
\end{pmatrix},  X=\begin{pmatrix}
X&0\\
0&0
\end{pmatrix}}$ then
$$
[Z,X]_{J_{n,r}}=\begin{pmatrix}
[Z_1,X]&-XZ_3\\
Z_2X&0
\end{pmatrix}=0,
$$
and thus
$Z=\begin{pmatrix}
\lambda I_r&0\\
02&Z_4
\end{pmatrix}$ with $\lambda\in \K$ and $Z_4\in End(V_2)$.

\noindent Finally, let
$Z=\begin{pmatrix}
\lambda I_r&0\\
0&Z_4
\end{pmatrix}\in \mathfrak z$ and $A=\begin{pmatrix}
A_1&A_3\\
A2&A_4
\end{pmatrix}\in\mathfrak g$, then we have
$$
[Z,A]_{J_{n,r}}=\begin{pmatrix}
0&\lambda A_3\\
-\lambda A_2&0
\end{pmatrix}=0.
$$
Thus $\mathfrak g=\begin{pmatrix}
End(V_1)&0_{r,n-r}\\
0_{n-r,r}&End(V_2)
\end{pmatrix}\simeq End(V_1)\oplus End(V_2),$ which completes the proof.
\end{proof}

Now we shall determine the center of the Lie algebra $(Mat(n,\K),[~,~]_J)$.
Let us denote it by $ \mathcal{Z}_{J}$, then we have the following

\begin{prop}\label{center}
Put $r=rank(J)$, if $r<n$ then $\dim \mathcal{Z}_{J}=(n-r)^2$
while if $r=n$, then $\dim \mathcal{Z}_{J}=1$.
\end{prop}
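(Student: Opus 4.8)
The plan is to reduce to the normal form: by Proposition \ref{Equivalent matrices} the Lie algebra $(Mat(n,\K),[~,~]_J)$ is isomorphic to $\mathfrak{gl}(n,r,\K)=(Mat(n,\K),[~,~]_{J_{n,r}})$, and isomorphic Lie algebras have isomorphic centers, so it suffices to compute $\mathcal Z_{J_{n,r}}$. I would then treat the two cases separately. When $r=n$ we have $J_{n,n}=I_n$, so $[A,B]_{J_{n,r}}=AB-BA$ is the ordinary commutator and $\mathfrak{gl}(n,r,\K)=\mathfrak{gl}(n,\K)$; its center is the scalar matrices $\K I_n$, which is classical, giving $\dim\mathcal Z_J=1$.

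For $r<n$, I would write a general matrix in block form $Z=\begin{pmatrix}Z_1&Z_3\\ Z_2&Z_4\end{pmatrix}$ with $Z_1\in Mat(r,\K)$, etc., and demand $[Z,A]_{J_{n,r}}=0$ for all $A=\begin{pmatrix}A_1&A_3\\ A_2&A_4\end{pmatrix}$. Using Lemma \ref{bracket}, the bracket is
$$
[Z,A]_{J_{n,r}}=\begin{pmatrix}[Z_1,A_1]&Z_1A_3-A_1Z_3\\ Z_2A_1-A_2Z_1&Z_2A_3-A_2Z_3\end{pmatrix}.
$$
Setting each block to zero for all choices of $A_1,A_2,A_3$ I would extract the constraints: vanishing of the $(1,1)$-block for all $A_1$ forces $Z_1$ central in $\mathfrak{gl}(r,\K)$, hence $Z_1=\lambda I_r$; vanishing of the $(1,2)$-block for all $A_3$ then reads $\lambda A_3=A_1Z_3$ for all $A_1,A_3$, which forces $\lambda=0$ (take $A_1=0$, $A_3\neq0$) and $Z_3=0$ (take $A_1=I_r$, vary $A_3$); symmetrically the $(2,1)$-block gives $Z_2=0$; and with $Z_2=Z_3=0$ the $(2,2)$-block vanishes automatically for every $A$. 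Thus $\mathcal Z_{J_{n,r}}=\left\{\begin{pmatrix}0&0\\0&Z_4\end{pmatrix}:Z_4\in Mat(n-r,\K)\right\}$, which has dimension $(n-r)^2$. Note these $Z$ are indeed central: for such $Z$ one checks directly from the displayed formula that $[Z,A]_{J_{n,r}}=0$ for all $A$, so the computation gives the center exactly, not just an upper bound.

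The one step needing a little care is the bookkeeping of "for all $A$": one must be sure to exploit enough independent choices of the off-diagonal blocks $A_2,A_3$ to kill $\lambda$, $Z_2$ and $Z_3$ simultaneously, rather than stopping after the diagonal-block analysis (which only pins down $Z_1$ and leaves $Z_4$ free). I expect no real obstacle here — the argument is essentially the one already sketched in the proof of Proposition \ref{split}(iii) for identifying the center of a reductive subalgebra, specialized to the whole algebra — but the case split $r<n$ versus $r=n$ is essential, since exactly when $r=n$ the off-diagonal blocks disappear and the constraint "$\lambda=0$" is lost, leaving the one-dimensional scalar center.
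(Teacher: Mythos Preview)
Your proposal is correct and follows essentially the same route as the paper: reduce via Proposition~\ref{Equivalent matrices} to $J_{n,r}$, then use the block formula of Lemma~\ref{bracket} to force $Z_1=\lambda I_r$ and subsequently $\lambda=0$, $Z_2=0$, $Z_3=0$, leaving $Z_4$ free. The only cosmetic difference is in the case $r=n$, where the paper uses the isomorphism $A\mapsto JA$ onto $\mathfrak{gl}(n,\K)$ directly rather than first normalizing $J$ to $I_n$; your treatment is in fact slightly more explicit than the paper's in extracting $\lambda=0$ from the off-diagonal blocks.
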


\begin{proof}

\noindent {\bf Case 1}: $rank(J)=n$

In this case, the mapping $\varphi: A\longmapsto JA$ is a Lie
algebra isomorphism from $\left(Mat(n,\mathbb{K}),[~,~]_J\right)$ onto
$\mathfrak {gl}(n,\mathbb{K})$, and since the center $\mathcal{Z}$ of
$\mathfrak {gl}(n,\mathbb{K})$ is $ \mathcal{Z}=\mathbb{K}I_n$ then $
\mathcal{Z}_{J}=\varphi^{-1}(\mathcal{Z})=\mathbb{K}J^{-1}.$

\noindent {\bf Case 2}: $rank(J)=r<n$

Let $Q, P$ in $GL(n,\mathbb{K})$ such that
$$
J=QJ_{n,r}P $$ and let $\varphi$ be the isomorphism mapping defined in
Proposition \ref{Equivalent matrices}, then
$$
\mathcal{Z}_{J}=\varphi^{-1}(\mathcal{Z}_{J_{n,r}}), $$ then we
shall compute $\mathcal{Z}_{J_{n,r}}$ the center of
$\mathfrak{gl}(n,r,\K)$.

$$
\mathcal{Z}_{J_{n,r}}=\{ A\in Mat(n,\mathbb{K}), |
~~[A,B]_{J_{n,r}}=0~\forall B\in Mat(n,\mathbb{K})\}.
$$
Put $\displaystyle{
A=\begin{pmatrix}
A_1&A_3\\
A_2&A_4
\end{pmatrix}},$ with $ A_1\in Mat(r,\mathbb{K}), A_2\in Mat((n-r)\times
r,\mathbb{K}),A_3\in Mat(r\times(n-r),\mathbb{K}), A_4\in
Mat(n-r,\mathbb{K})$, using Lemma \ref{bracket}, we get
$$
[A,B]_{J_{n,r}}=\begin{pmatrix}[A_1,B_1]&A_1B_3-B_1A_3&\\
A_2B_1-B_2A_1&A_2B_3-B_2A_3)\end{pmatrix}.
$$
Then $A\in\mathcal{Z}_{J_{n,r}}$ if and only if the bracket $
[A,B]_{J_{n,r}}$ is identically zero for any matrix $B$ in $Mat(n,\K)$ which
implies in particular $[A_1,B_1]=0,~~\forall
~~B_1\in Mat(r,\mathbb{K})$ and  thus $A_1$ is in the center of
$\mathfrak {gl}(r,\K)$, that is
$$
A_1=\lambda I_r,\qquad \lambda\in\K.
$$
Now with the equations
$$
A_1B_3-B_1A_3= A_2B_1-B_2A_1 =A_2B_3-B_2A_3=0,~~\forall ~~B~~\in Mat(n,\mathbb{K}),
$$
we obtain
$$
A=\begin{pmatrix}
0&0\\
0&A_4
\end{pmatrix},~~ \hbox { with } A_4\in Mat(n-r,\mathbb{K}).
$$
Thus
$$
\mathcal{Z}_{J_{n,r}}=\mathbb{K}\begin{pmatrix}
0&0\\
0&A_4
\end{pmatrix}\simeq End(V_2),~~ \hbox { with } A_4\in Mat(n-r,\mathbb{K}),
$$
and
$$
\mathcal{Z}_{J}=P^{-1}\mathcal{Z}_{J_{n,r}}Q^{-1} .
$$
\end{proof}

Now we are able to give the following
\begin{thm}
Let $n$ be an integer with $n\geq 2$.
The Lie algebras
$\left(Mat(n,\K),[~,~]_J\right)$ and $\left(Mat(n,\K),[~,~]_{J'}\right)$ are isomorphic
if and only if the matrices $J$ and $J'$ are equivalent.
\end{thm}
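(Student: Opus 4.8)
The plan is to prove the two directions separately, with the easy direction coming first. If $J$ and $J'$ are equivalent, then Proposition \ref{Equivalent matrices} already gives that $\bigl(Mat(n,\K),[~,~]_J\bigr)$ and $\bigl(Mat(n,\K),[~,~]_{J'}\bigr)$ are isomorphic, so nothing more is needed there. The substance of the theorem is the converse: an abstract Lie algebra isomorphism $\bigl(Mat(n,\K),[~,~]_J\bigr)\xrightarrow{\ \sim\ }\bigl(Mat(n,\K),[~,~]_{J'}\bigr)$ forces $\mathrm{rank}(J)=\mathrm{rank}(J')$, which by Lemma 1.3 is exactly equivalence of $J$ and $J'$.

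First I would reduce to normal form: by Proposition \ref{Equivalent matrices} we may assume $J=J_{n,r}$ and $J'=J_{n,r'}$ where $r=\mathrm{rank}(J)$, $r'=\mathrm{rank}(J')$, so the task is to show $\mathfrak{gl}(n,r,\K)\cong\mathfrak{gl}(n,r',\K)$ implies $r=r'$. The strategy is to extract $r$ from the isomorphism type by means of dimension invariants. The cleanest invariant available is the center: by Proposition \ref{center}, $\dim\mathcal Z_{J_{n,r}}=(n-r)^2$ when $r<n$ and $\dim\mathcal Z_{J_{n,n}}=1$. An isomorphism preserves the dimension of the center, so $(n-r)^2=(n-r')^2$ (with the appropriate reading when one of them equals $n$), hence $n-r=n-r'$ and $r=r'$. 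I would handle the borderline cases explicitly: if $r=n$ then $\dim\mathcal Z=1=(n-r')^2$ forces $r'=n-1$ or $r'=n$; the value $r'=n-1$ gives $\dim\mathcal Z_{J_{n,n-1}}=1$ as well, so the center alone does not separate $r=n$ from $r=n-1$, and this case needs a second invariant.

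The main obstacle, therefore, is distinguishing $r=n$ from $r=n-1$ (and more generally any coincidences of $(n-r)^2$ that the center misses — though $(n-r)^2$ is injective in $n-r\ge 0$, so the only genuine collision is the $1=1^2$ one). To separate $\mathfrak{gl}(n,\K)=\mathfrak{gl}(n,n,\K)$ from $\mathfrak{gl}(n,n-1,\K)$ I would use that the former is reductive (indeed $[\mathfrak{gl}(n,\K),\mathfrak{gl}(n,\K)]=\mathfrak{sl}(n,\K)$ is semisimple for $n\ge 2$), whereas $\mathfrak{gl}(n,n-1,\K)\cong S(V_1,V_2)$ with $\dim V_1=n-1$, $\dim V_2=1$ by Proposition \ref{split}(i), and this semidirect product contains the nonzero abelian ideal $\mathfrak n$ (here $\mathfrak n=Hom(V_1,V_2)\oplus Hom(V_2,V_1)\oplus Hom(V_2,V_2)$ with the stated bracket, which is in fact abelian when $\dim V_2=1$ since $AB'-A'B$ takes values in $Hom(V_2,V_2)$ and... more carefully, $\mathfrak n$ is at worst two-step nilpotent and nontrivial), so $\mathfrak{gl}(n,n-1,\K)$ is not semisimple and its radical is nonzero. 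Equivalently and more uniformly, I would compare $\dim[\mathfrak g,\mathfrak g]$ or the dimension of the solvable radical across the two algebras: for $r=n$ the radical is $\K I_n$ of dimension $1$, while for $r<n$ Proposition \ref{split}(iii) identifies the reductive part as $End(V_1)\oplus End(V_2)$ of dimension $r^2+(n-r)^2$, so the radical has dimension $n^2-r^2-(n-r)^2=2r(n-r)$, which is $0$ only when $r\in\{0,n\}$ and is strictly positive (namely $2(n-1)$) when $r=n-1$; since $2(n-1)\ge 2>1$ for $n\ge 2$, the radical dimensions differ and the two algebras cannot be isomorphic. Combining the center invariant (which pins down $|n-r|$ except for the single ambiguous value) with the radical-dimension invariant (which resolves $r=n$ versus $r=n-1$) yields $r=r'$ in all cases, and then Lemma 1.3 finishes the proof.
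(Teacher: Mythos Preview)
Your argument is correct in outline and takes a different route from the paper. The paper uses the dimension of the maximal reductive subalgebra from Proposition~\ref{split}(iii) as its primary invariant: $\dim\bigl(End(V_1)\oplus End(V_2)\bigr)=r^2+(n-r)^2$ coincides for $r\neq s$ only when $r+s=n$, and in that residual case the center dimensions $(n-r)^2=s^2$ versus $(n-s)^2=r^2$ differ. You instead lead with the center (Proposition~\ref{center}), whose dimension $(n-r)^2$ is strictly decreasing on $\{0,\dots,n-1\}$ and collides only at the pair $(n-1,n)$ (both giving~$1$), and then invoke a second invariant to separate that single case. Your route leans only on standard invariants (center, radical), whereas the paper's argument presumes that the dimension of a maximal reductive subalgebra is an isomorphism invariant.

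One point needs fixing. You claim the radical of $\mathfrak{gl}(n,r,\K)$ has dimension $n^2-r^2-(n-r)^2=2r(n-r)$, reading it off as the complement of the maximal reductive subalgebra from Proposition~\ref{split}(iii). That inference is wrong: a maximal reductive subalgebra is not a complement to the radical, and indeed your formula would place the center $End(V_2)$ (of dimension $(n-r)^2$) outside the radical, which is impossible. The correct count goes through the ideal $\mathfrak n$ of Proposition~\ref{split}(i): since $\mathfrak n$ is nilpotent of dimension $n^2-r^2$ and $\mathfrak{gl}(n,r,\K)/\mathfrak n\cong\mathfrak{gl}(r,\K)$ has radical $\K I_r$, one gets $\dim\mathrm{rad}\,\mathfrak{gl}(n,r,\K)=n^2-r^2+1$ for $1\le r\le n$. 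For $r=n-1$ this is $2n$, still strictly larger than $1$ when $n\ge2$, so your conclusion survives. In fact your first, qualitative remark---that $\mathfrak{gl}(n,\K)$ is reductive while $\mathfrak{gl}(n,n-1,\K)$ contains the nonzero nilpotent ideal $\mathfrak n$ and hence is not---already settles the borderline case and avoids the miscount entirely.
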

\begin{proof}
Let $J, J'\in Mat(n,\K)$. If $J$ and $J'$ are equivalent then from Proposition
\ref{Equivalent matrices} the
corresponding Lie algebras are isomorphic.

\noindent Now we shall prove the converse.

\noindent Let $J, J'\in Mat(n,\K)$ with $rk(J)=r$ and $rk(J')=s$ with $r\neq s$.

\noindent Let $V$ be a vector space over $\K$ with $\dim V=n$ and put
$V=V_1\oplus V_2=W_1\oplus W_2$ with $\dim V_1=r$ and $\dim W_1=s$.
From Proposition \ref{Equivalent matrices}, we can assume that $J=J_{n,r}$
and $J'=J_{n,s}$ and from Proposition \ref{split}, the reductive part of
$\mathfrak {gl}(n,r,\K)$ is  $End(V_1)\oplus End(V_2)$ while the reductive part of
$\mathfrak {gl}(n,s,\K)$ is $End(W_1)\oplus End(W_2)$. These reductive parts have the same
dimension if and only if $r+s=n$ but in this case they don't have the same dimensionality of their
center. Thus they are not isomorphic and this completes the proof.
\end{proof}

As a consequence of this classification, we have the following

\begin{thm}
Let $\mathfrak H_n$ be the $(2n+1)$-dimensional Heisenberg algebra. Then there
is no faithful finite dimensional linear representation of
$\mathfrak H_n$ on a vector space $V$ with $\dim V\leq n+1$.
\end{thm}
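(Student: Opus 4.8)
We argue by contradiction. The plan is to assume a faithful $\rho\colon\mathfrak H_n\to\mathfrak{gl}(V)$ with $\dim V\le n+1$, normalise it until it acts by nilpotent operators, and then play a dimension count off against the non-degeneracy of the symplectic form carried by $\mathfrak H_n$ modulo its centre.

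First I would reduce to the case $\K$ algebraically closed (scalar extension changes neither $\dim V$, nor faithfulness, nor the isomorphism type of $\mathfrak H_n$) and then to the case where every $\rho(x)$ is nilpotent. For the latter: $\mathfrak H_n$ is nilpotent, so $V$ splits as a direct sum of generalised weight submodules $V=\bigoplus_\chi V_\chi$, the weights $\chi\colon\mathfrak H_n\to\K$ vanishing on $[\mathfrak H_n,\mathfrak H_n]=\K z$ and $\rho(x)-\chi(x)\,\mathrm{id}$ acting nilpotently on $V_\chi$. Since $\rho(z)\neq0$ one may pick $\chi$ with $\rho(z)|_{V_\chi}\neq0$ and pass to the twisted representation $x\mapsto\rho(x)|_{V_\chi}-\chi(x)\,\mathrm{id}$ on $V_\chi$; this is still a homomorphism (because $\chi$ kills brackets) and still faithful, the point being that its kernel is an ideal of $\mathfrak H_n$ missing $z$, while every nonzero ideal of $\mathfrak H_n$ must contain the centre $\K z$ (a one-line consequence of the non-degeneracy of the bracket modulo the centre). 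After this normalisation $\dim V\le n+1$, $Z:=\rho(z)\neq0$, and every $\rho(x)$ is nilpotent.

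Next I would pick the right test vector. Since $U:=Z(V)$ is a nonzero submodule of nilpotent operators, Engel's theorem gives $0\neq u_0\in U$ annihilated by all of $\rho(\mathfrak H_n)$; choose $v_0$ with $Zv_0=u_0$. Then $v_0\notin\ker Z$, and $Z(\rho(x)v_0)=\rho(x)(Zv_0)=\rho(x)u_0=0$ shows that $T\colon x\mapsto\rho(x)v_0$ maps $\mathfrak H_n$ into $\ker Z$. As $Z\neq0$, $\dim\ker Z\le\dim V-1\le n$, hence $\dim\ker T\ge(2n+1)-n=n+1$, and $z\notin\ker T$ because $T(z)=u_0\neq0$. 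Now comes the key step: the bracket induces a non-degenerate alternating form on the $2n$-dimensional space $\mathfrak H_n/\K z$, whose isotropic subspaces have dimension $\le n$; the image of $\ker T$ there is injective of dimension $\ge n+1$, so it is not isotropic, giving $x,y\in\ker T$ with $[x,y]=\lambda z$, $\lambda\neq0$. But then $\lambda Zv_0=\rho([x,y])v_0=\rho(x)\rho(y)v_0-\rho(y)\rho(x)v_0=0$ since $\rho(x)v_0=\rho(y)v_0=0$, forcing $Zv_0=0$ and contradicting $Zv_0=u_0\neq0$.

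I expect the main obstacle to be the reduction to nilpotent operators in the second step, not the final count: a faithful representation of $\mathfrak H_n$ need not be by nilpotent matrices (enlarge the minimal $(n+2)$-dimensional one by a line on which some $p_i$ acts by a nonzero scalar), yet the short argument in the third step genuinely relies on nilpotency. So the real content sits in the combination of the generalised weight-space decomposition, Engel's theorem, and the fact that no proper ideal of $\mathfrak H_n$ kills $z$; once those are assembled, $\dim\ker Z\le n$ together with the non-degeneracy of the symplectic form closes the argument immediately.
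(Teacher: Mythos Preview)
Your argument is correct and follows a genuinely different route from the paper's. The paper's proof is a three-line trace argument: it asserts that, because $\rho$ is injective, $\rho(Z)=\lambda I_r$ for some $\lambda\neq0$, and then observes that $\rho(Z)=[\rho(X_i),\rho(Y_i)]$ is traceless, a contradiction. Note that this argument never uses the hypothesis $\dim V\le n+1$; taken at face value it would exclude \emph{every} faithful finite-dimensional representation of $\mathfrak H_n$, so the step ``$\rho$ injective $\Rightarrow\rho(Z)$ scalar'' is doing work that is not justified (Schur's lemma needs irreducibility, not merely faithfulness). Your proof, by contrast, genuinely engages the dimension bound: after the reduction to nilpotent operators via the generalised weight decomposition and the twist by $\chi$ (together with the observation that every nonzero ideal of $\mathfrak H_n$ contains $z$), Engel supplies the test vector $v_0$, the inequality $\dim\ker Z\le\dim V-1\le n$ forces $\dim\ker T\ge n+1$, and the non-degeneracy of the induced symplectic form on $\mathfrak H_n/\K z$ produces $x,y\in\ker T$ with $[x,y]\neq0$, yielding the contradiction. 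What the paper's intended approach would buy, were the scalar claim supportable, is brevity; what your approach buys is a complete proof that actually uses the stated bound.
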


\begin{proof} Suppose that there exists a faithful linear representation
$\rho$ of $\mathfrak H_n$, on a vector space $V$ with $\dim V\leq
n+1$. Let $\rho$ such mapping, then we have $\rho:\mathfrak
H_n\longmapsto \mathfrak{gl}(r,\K)$ which an injective Lie algebra
morphism.

\noindent Put $\mathcal{H}_{n}=Span \{X_i=E_{1,i+1},
Y_i=E_{i+1,n+1}, Z=E_{1,n+2},\quad i=1,\dots, n\}$. We simply
check that $\mathcal{H}_{n}$ is a subalgebra of
$\mathfrak{gl}(n+2,n+1,\K)$, and
$[X_i,Y_j]_{J_{n+1,n}}=\delta_{i,j}Z$, it is the $(2n+1)$-dimensional
Heisenberg algebra. Then we can take $ \mathcal{H}_{n}$ instead of $\mathfrak
H_n$. If such mapping $\rho$ exists, then we must have
$$
\begin{aligned}
\rho\bigl([X_i,Y_i]_{J_{n+2,n+1}}\bigr)&=[\rho(X_i),\rho(Y_i)]\\
\rho(Z)&= [\rho(X_i),\rho(Y_i)],
\end{aligned}
$$ but since $\rho$ is an
injective Lie algebra morphism, then $\rho(Z)=\lambda I_r$ with
$\lambda\neq 0$ wick is impossible because $I_r$ is traceless
matrix.
\end{proof}
\subsection{Example} Let $\mathfrak{H}=span\{Z,Y,X\}$ with the only non vanishing bracket is
$[X,Y]=Z$ then the classical representation of $\mathfrak{H}$ is
$$
X=\begin{pmatrix}
0&1&0\\
0&0&0\\
0&0&0
\end{pmatrix},\quad Y=\begin{pmatrix}
0&0&0\\
0&0&1\\
0&0&0
\end{pmatrix},\quad Z=\begin{pmatrix}
0&0&1\\
0&0&0\\
0&0&0
\end{pmatrix}.
$$
From the last corollary, we can't built a faithful homomorphism
$\mathfrak{H}$ in $\mathfrak{gl}(2,\mathbb{R})$ but we can built
such homomorphism in $\mathfrak{gl}(2,1,\mathbb{R})$ with the
following realization
$$
X=\begin{pmatrix}
0&0\\
1&0
\end{pmatrix},\quad
Y=\begin{pmatrix}
0&1\\
0&0
\end{pmatrix},\quad
Z=\begin{pmatrix}
0&0\\
0&1\\
\end{pmatrix}.
$$

\section { The case of $Mat(n\times m,\K)$} In this section we will
deal with the linear space of strict rectangular matrices
$Mat(n\times m,\K)$ which is not an associative algebra. Fix
$J\in Mat(m\times n, \K)$ and put
$$
[A,B]_J=AJB-BJA,\quad A, B\in Mat(n\times m,\K).
$$
Recall that we have denoted the matrix
$\displaystyle{\begin{pmatrix}
I_r&0_{r,n-r}\\
0_{m-r,r}&0_{m-r,n-r}\end{pmatrix}}$ by $J_{m,n,r}$ and we will denotes the Lie algebra
$\bigl(Mat(n\times m,\K),[~,~]_{J_{m,n,r}}\bigr)$ by $\mathfrak{gl}(n,m,r,\K)$. In the case
when $n=m$, $\mathfrak{gl}(n,n,r,\K)$ is simply $\mathfrak{gl}(n,r,\K)$

\begin{lem} Let $\mathfrak h$ be a Lie sub-algebra of $\mathfrak{gl}(n,\K)$,
then $\mathfrak h$ is a Lie sub-algebra of
$\mathfrak{gl}(p,q,n,\K)$ for any integers $p,q\geq n$.
\end{lem}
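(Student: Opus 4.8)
The plan is to mimic the proof of the analogous statement in the square case (the earlier lemma showing $\mathfrak h\subset\mathfrak{gl}(n,\K)$ sits inside $\mathfrak{gl}(p,n,\K)$ for $p\geq n$), but now allowing both dimensions to grow. The key observation is the block structure of $J_{p,q,n}$ when $p,q\geq n$: it is the $q\times p$ matrix with an $n\times n$ identity block in the top-left corner and zeros elsewhere. So for $A\in Mat(p\times q,\K)$ written in block form
$$
A=\begin{pmatrix}A_1&A_3\\A_2&A_4\end{pmatrix},\qquad A_1\in Mat(n,\K),
$$
one computes $AJ_{p,q,n}=\begin{pmatrix}A_1&0\\A_2&0\end{pmatrix}$ and $J_{p,q,n}A=\begin{pmatrix}A_1&A_3\\0&0\end{pmatrix}$, exactly as in the proof of Lemma \ref{bracket}.

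First I would make precise the embedding: identify $Mat(n,\K)$ with the subspace of $Mat(p\times q,\K)$ consisting of matrices whose only nonzero block is the top-left $n\times n$ block, i.e. the map $\iota:X\mapsto\begin{pmatrix}X&0\\0&0\end{pmatrix}$. This is clearly an injective linear map. Then for $X,Y\in Mat(n,\K)$ I would compute $[\iota(X),\iota(Y)]_{J_{p,q,n}}=\iota(X)J_{p,q,n}\iota(Y)-\iota(Y)J_{p,q,n}\iota(X)$. Using the block computation above, $\iota(X)J_{p,q,n}\iota(Y)=\begin{pmatrix}X&0\\0&0\end{pmatrix}\begin{pmatrix}Y&0\\0&0\end{pmatrix}=\begin{pmatrix}XY&0\\0&0\end{pmatrix}=\iota(XY)$, and symmetrically the other term is $\iota(YX)$. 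Hence $[\iota(X),\iota(Y)]_{J_{p,q,n}}=\iota(XY-YX)=\iota([X,Y])$, so $\iota$ is a Lie algebra homomorphism from $\mathfrak{gl}(n,\K)$ (with its ordinary commutator) into $\mathfrak{gl}(p,q,n,\K)$. In particular $\iota(\mathfrak h)$ is a Lie subalgebra isomorphic to $\mathfrak h$, which is the claim.

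There is essentially no obstacle here — the only thing to be careful about is bookkeeping the rectangular block sizes correctly (the top-left block must genuinely be square of size $n$, which forces the hypothesis $p,q\geq n$ and makes the identity block $I_n$ well-defined inside a $q\times p$ matrix), and observing that the product $\iota(X)J_{p,q,n}\iota(Y)$ makes dimensional sense: $p\times q$ times $q\times p$ times $p\times q$ gives $p\times q$. One could also phrase this as a corollary of Lemma \ref{bracket} applied with $r=n$, since then the bracket on the $(1,1)$-block is just $[A_1,B_1]$ and a subalgebra supported entirely in that block inherits exactly the commutator bracket. I would write it in the first style for self-containedness. The statement for $p=q=n$ recovers $\mathfrak{gl}(n,\K)$ itself, and the case $p=q$ recovers the earlier square-matrix lemma, so this is the common generalization.
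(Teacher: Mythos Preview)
Your proof is correct and is essentially the same as the paper's: the paper simply asserts that for any $\mathfrak h\subset Mat(n,\K)$ and $p,q\geq n$ one has $[\mathfrak h,\mathfrak h]_{J_{q,p,n}}=[\mathfrak h,\mathfrak h]$, which is exactly your computation $[\iota(X),\iota(Y)]_{J}=\iota([X,Y])$ written more tersely. One small notational slip: following the paper's convention $J_{m,n,r}\in Mat(m\times n,\K)$, the relevant matrix for $\mathfrak{gl}(p,q,n,\K)=(Mat(p\times q,\K),[~,~]_J)$ is $J_{q,p,n}$ rather than $J_{p,q,n}$, though your description of it as the $q\times p$ matrix with an $I_n$ block is correct.
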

\begin{proof}
For any subset $\mathfrak h\subset Mat(n,\K)$ and $q,p\geq n$, we have
$$
[\mathfrak h,\mathfrak h]_{J_{q,p,n}}=[\mathfrak h,\mathfrak h].
$$
And thus the conclusion holds.
\end{proof}
Now let $J\in Mat(m\times n,\K)$, and let's denotes by $\mathcal Z_J$ the
center of the Lie algebra $\left(Mat(n\times m,\K),[~,~]_J\right)$, then we
have

\begin{prop} Let $J\in Mat(m\times n,\K)$ with $rk(J)=r$. Then the center of
$\left(Mat(n\times m,\K),[~,~]_J\right)$ is $(n-r)(m-r)$
dimensional. In particular, if $\mathcal Z_{J_{m,n,r}}$ denotes the center
of $\left(Mat(n\times m,\K),[~,~]_J{_{m,n,r}}\right)$,then $\mathcal Z_{J_{m,n,r}}$ is
spanned by the matrices of the form
$$
A=\begin{pmatrix}
0_r&0_{r,n-r}\\
0_{m-r,r}&A_4\end{pmatrix},\hbox { with } A_4\in
Mat((m-r)\times(n-r),\K).
$$
\end{prop}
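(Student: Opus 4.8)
The plan is to reduce to the normal form $J=J_{m,n,r}$ using Proposition~\ref{Equivalent matrices}: the isomorphism $\varphi: A\mapsto PAQ$ carries $\mathcal Z_J$ onto $\mathcal Z_{J_{m,n,r}}$, so the dimension is preserved and it suffices to compute the center of $\mathfrak{gl}(n,m,r,\K)$. Here one must be slightly careful, since Lemma~\ref{bracket} as stated gives the block form of the bracket only in the square case $n=m$; the first step is therefore to record the analogous block decomposition in the rectangular case. Writing $A=\begin{pmatrix}A_1&A_3\\A_2&A_4\end{pmatrix}$ with $A_1\in Mat(r,\K)$, $A_2\in Mat((n-r)\times r,\K)$, $A_3\in Mat(r\times(m-r),\K)$, $A_4\in Mat((n-r)\times(m-r),\K)$, one has
$$
AJ_{m,n,r}=\begin{pmatrix}A_1&0\\A_2&0\end{pmatrix},\qquad J_{m,n,r}A=\begin{pmatrix}A_1&A_3\\0&0\end{pmatrix},
$$
exactly as in the proof of Lemma~\ref{bracket}, and hence
$$
[A,B]_{J_{m,n,r}}=\begin{pmatrix}[A_1,B_1]&A_1B_3-B_1A_3\\A_2B_1-B_2A_1&A_2B_3-B_2A_3\end{pmatrix}.
$$

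Next I would impose that $[A,B]_{J_{m,n,r}}=0$ for all $B$ and solve block by block, in the same spirit as the proof of Proposition~\ref{center}. The $(1,1)$ block forces $[A_1,B_1]=0$ for all $B_1\in Mat(r,\K)$, hence $A_1=\lambda I_r$ for some $\lambda\in\K$. Feeding this back, the $(1,2)$ block gives $\lambda B_3=B_1A_3$ for all $B_1,B_3$; taking $B_1=0$ and $B_3$ arbitrary forces $\lambda=0$, and then taking $B_3=0$ with $B_1$ arbitrary forces $A_3=0$. Symmetrically, the $(2,1)$ block $A_2B_1-B_2A_1=A_2B_1=0$ for all $B_1$ forces $A_2=0$ (and the $(2,2)$ block is then automatically zero since $A_1=0$, $A_2=0$, $A_3=0$). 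Conversely, any $A$ of the form $\begin{pmatrix}0_r&0_{r,n-r}\\0_{m-r,r}&A_4\end{pmatrix}$ satisfies $AJ_{m,n,r}=0=J_{m,n,r}A$, so $[A,B]_{J_{m,n,r}}=0$ for every $B$; thus such $A$ lie in the center. This identifies $\mathcal Z_{J_{m,n,r}}$ with the space of $A_4\in Mat((m-r)\times(n-r),\K)$, which has dimension $(n-r)(m-r)$, and the general case follows by transporting through $\varphi$, giving $\mathcal Z_J=P^{-1}\mathcal Z_{J_{m,n,r}}Q^{-1}$ of the same dimension.

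I do not expect any serious obstacle here: the argument is a routine block-matrix computation parallel to Proposition~\ref{center}, the only point requiring attention being to state and use the rectangular analogue of Lemma~\ref{bracket} rather than the square version, and to keep the four block sizes consistent throughout (in particular noting that the blocks $A_1$ and $A_4$ are the only ones that survive, with $A_1$ forced to be $\lambda I_r$ and then $\lambda$ forced to vanish as soon as $r<\min(m,n)$; when $r=\min(m,n)$ one of the off-diagonal block spaces is empty and the bookkeeping degenerates, but the conclusion $\dim\mathcal Z_J=(n-r)(m-r)$ still holds and in the extreme case $r=m=n$ recovers $\dim\mathcal Z_J=1$ in agreement with Proposition~\ref{center}). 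The mild subtlety worth flagging is simply that one must verify both inclusions — that every central element has the stated form, and that every matrix of the stated form is central — since only the latter is immediate from $AJ_{m,n,r}=J_{m,n,r}A=0$.
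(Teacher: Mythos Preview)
Your argument is correct and is exactly the route the paper takes: its entire proof is the single sentence ``similar to the proof of Proposition~\ref{center}'', and what you have written is precisely that computation carried over to the rectangular block decomposition.

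One small slip worth correcting in your closing parenthetical: you say the formula $(n-r)(m-r)$ ``in the extreme case $r=m=n$ recovers $\dim\mathcal Z_J=1$ in agreement with Proposition~\ref{center}'', but $(n-n)(m-m)=0$, not $1$. The square full-rank case is genuinely exceptional --- your step ``taking $B_1=0$ and $B_3$ arbitrary forces $\lambda=0$'' needs a nonzero $B_3$ (or symmetrically a nonzero $B_2$), i.e.\ $r<m$ or $r<n$, which always holds when $n\neq m$ but fails when $r=m=n$. That is precisely why Proposition~\ref{center} singles out $r=n$ as a separate case, and why the present proposition lives in the section on strictly rectangular matrices; the slip does not affect your proof of the proposition as stated.
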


\begin{proof}
The proof of this proposition is similar to the proof of the
Proposition \ref{center}.

\end{proof}

Now we can give the following
\begin{thm}Let $J, J'\in Mat(m\times n,\K)$ with $min(m,n)\geq 2$. Then the Lie algebras
$\left(Mat(n\times m,\K),[~,~]_J\right)$ and $\left(Mat(n\times m,\K),[~,~]_{J'}\right)$
are isomorphic if and only if $J$ and $J'$ are equivalent in
$Mat(m\times n,\K).$
\end{thm}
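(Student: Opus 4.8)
The plan is to mirror the strategy used for the square case (Theorem in Section~\ref{square}), since the forward implication is already handled by Proposition~\ref{Equivalent matrices}: if $J$ and $J'$ are equivalent then $\varphi(A)=PAQ$ gives the desired Lie algebra isomorphism. So the whole content is the converse, and the natural device is a pair of conjugation-invariant numerical data attached to $\bigl(Mat(n\times m,\K),[~,~]_J\bigr)$ that together detect the rank of $J$. First I would reduce, via Proposition~\ref{Equivalent matrices} and Lemma~1.4, to the normal forms: writing $r=rk(J)$, $s=rk(J')$, we may assume $J=J_{m,n,r}$ and $J'=J_{m,n,s}$, and we must show $r=s$ whenever $\mathfrak{gl}(n,m,r,\K)\cong\mathfrak{gl}(n,m,s,\K)$.

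The key step is to isolate enough isomorphism invariants. The immediately available one is the dimension of the center, computed in the Proposition just above: $\dim\mathcal Z_{J_{m,n,r}}=(n-r)(m-r)$. This alone is not injective in $r$ on the range $0\le r\le\min(m,n)$ (for instance $(n-r)(m-r)$ need not determine $r$ when $n\neq m$, e.g.\ values can coincide for different $r$), so a second invariant is needed. Following the blueprint of Proposition~\ref{split}, I would identify the ``reductive part'' of $\mathfrak{gl}(n,m,r,\K)$: using Lemma~\ref{bracket}, an element $A$ decomposed into blocks $A_1\in Mat(r,\K)$, etc., has bracket whose only ``semisimple-looking'' corner is $[A_1,B_1]$, so the analogue of $End(V_1)$ — the block matrices supported on the $A_1$ corner — is a maximal reductive subalgebra isomorphic to $\mathfrak{gl}(r,\K)$ (when $r\le\min(m,n)$). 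Thus $r$ itself, or at least $r^2=\dim(\text{reductive part})$, is an isomorphism invariant. Combining ``$r$ is recoverable from the reductive part'' with the center dimension then forces $r=s$.

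The main obstacle I anticipate is making the ``reductive part'' argument fully rigorous in the rectangular setting, where $Mat(n\times m,\K)$ is not an associative algebra and one cannot speak of $\mathfrak g=\mathfrak z\oplus[\mathfrak g,\mathfrak g]$ for a general reductive subalgebra by quoting \cite{IT} verbatim. One must check (i) that the $A_1$-corner subalgebra really is reductive and maximal among reductive subalgebras of $\mathfrak{gl}(n,m,r,\K)$, and (ii) that its centralizer structure pins down $r$ — essentially by repeating the block computation in the proof of Proposition~\ref{split}(iii) with rectangular blocks, showing any reductive subalgebra containing the $A_1$-corner $\mathfrak{gl}(r,\K)$ is contained in a block-diagonal $\mathfrak{gl}(r,\K)\oplus(\text{abelian})$, whose derived algebra is $\mathfrak{sl}(r,\K)$ of dimension $r^2-1$. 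Once $r^2$ (equivalently $r$) is an invariant, the converse is immediate: $\mathfrak{gl}(n,m,r,\K)\cong\mathfrak{gl}(n,m,s,\K)$ gives $r=s$, hence by Lemma~1.4 the matrices $J,J'$ of equal rank are equivalent in $Mat(m\times n,\K)$, which completes the proof. The hypothesis $\min(m,n)\ge 2$ enters exactly to guarantee that $\mathfrak{gl}(r,\K)$ for the relevant $r\ge 1$ has nontrivial derived algebra, so that the trace/traceless obstruction (as in the Heisenberg corollary) is available to separate cases where the center dimensions happen to agree.
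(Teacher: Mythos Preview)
Your forward direction matches the paper. For the converse, however, you take a much longer route than necessary, and the detour is triggered by a claim that is simply false: you assert that $r\mapsto (n-r)(m-r)$ ``need not determine $r$ when $n\neq m$''. In fact this function is strictly decreasing on the relevant range $0\le r\le\min(m,n)$, since for integers $r<r'\le\min(m,n)$ one has $n-r>n-r'\ge 0$ and $m-r>m-r'\ge 0$, hence $(n-r)(m-r)>(n-r')(m-r')$. (Equivalently, the real derivative $-(n+m-2r)$ is negative for $r<(n+m)/2$, and $\min(m,n)\le(n+m)/2$.) So the center dimension alone recovers $r$, and that is precisely the paper's entire proof of the converse: if $rk(J)\neq rk(J')$ then $\dim\mathcal Z_J=(n-rk(J))(m-rk(J))\neq(n-rk(J'))(m-rk(J'))=\dim\mathcal Z_{J'}$, so the two Lie algebras cannot be isomorphic.

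Your reductive-part programme would eventually also recover $r$, but it is unnecessary here and---as you yourself flag---requires nontrivial extra work to make rigorous in the rectangular setting where $Mat(n\times m,\K)$ is not an associative algebra. The one place where the center dimension genuinely fails to separate ranks is the \emph{square} case $n=m$ at the top of the range: there $r=n$ gives $\dim\mathcal Z_J=1$ (the center is $\K J^{-1}$), not $(n-n)^2=0$, and this collides with the value $1=(n-(n-1))^2$ for $r=n-1$. That is exactly why the paper invokes the reductive part in Section~\ref{square} but does not need it here. Consequently your account of the role of the hypothesis $\min(m,n)\ge 2$ is also off target: it is not there to make a derived-algebra/traceless obstruction available.
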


\begin{proof}

Let $J$ and $J'$ two equivalent matrices in $Mat(m\times n, \K)$
then from Proposition \ref{Equivalent matrices}, the corresponding
Lie algebras structures on $Mat(n\times m,\K)$ are isomorphic.

Conversely, let $J, J'$ be in $Mat(m\times n, \K)$ with $
rk(J)\neq rk (J')$, then from Proposition
\ref{center}, we have $
\dim\mathcal{Z}_J\neq \dim\mathcal{Z}_{J'}$, since
$$
\dim\mathcal{Z}_J=(n-rk(J))(m-rk(J))\quad \hbox { and }
\quad \dim\mathcal{Z}_{J'}=(n-rk(J'))(m-rk(J')).
$$

\end{proof}

\begin{cor}
Let $V$ be a vector space over $\K$ with $\dim V\geq 2$. Then $V$ can be equipped with a non trivial
Lie algebra structure.
\end{cor}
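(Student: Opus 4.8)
The plan is to exhibit the required structure as one of the brackets $[~,~]_J$ studied above, taken in the degenerate rectangular shape $m=1$. Put $n=\dim V\ge 2$. Fixing a basis of $V$ identifies $V$, as a $\K$-vector space, with $Mat(n\times 1,\K)$, the space of column vectors. Choose $J=E_{1,1}\in Mat(1\times n,\K)$, that is the nonzero row $(1,0,\dots,0)$; by the Lemma asserting that $\bigl(Mat(n\times m,\K),[~,~]_J\bigr)$ is a Lie algebra for every $J\in Mat(m\times n,\K)$ (a statement carrying no restriction on $m,n$), the bracket $[A,B]_J=AJB-BJA$ turns $Mat(n\times 1,\K)$, hence $V$, into a Lie algebra. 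In the notation of Section~\ref{square} this is precisely $\mathfrak{gl}(n,1,1,\K)$.

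It then remains only to verify that the resulting bracket is non-trivial, i.e. that the Lie algebra is not abelian. Denote by $e_i=E_{i,1}$ the standard column basis vectors. A one-line matrix computation gives $e_iJ=E_{i,1}\in Mat(n,\K)$ and hence $e_iJe_k=\delta_{k,1}\,e_i$, so that
$$
[e_i,e_k]_J=\delta_{k,1}\,e_i-\delta_{i,1}\,e_k .
$$
Since $n\ge 2$, the vector $e_2$ exists and $[e_1,e_2]_J=-e_2\neq 0$; thus the bracket does not vanish identically and the Lie algebra structure on $V$ is non-trivial. This proves the corollary.

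There is essentially no obstacle here: the statement is a direct specialization of the construction of the preceding section to $m=1$, the only point requiring attention being the explicit choice of a pair of basis elements with non-zero bracket, which the displayed formula supplies. (Any nonzero $J\in Mat(1\times n,\K)$ would serve equally well, being equivalent to $E_{1,1}$; the choice $J=E_{1,1}$ merely shortens the computation, and indeed $e_1,e_2$ already span a copy of the two-dimensional non-abelian Lie algebra inside $\mathfrak{gl}(n,1,1,\K)$.)
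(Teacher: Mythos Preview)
Your proof is correct and follows essentially the same approach as the paper: identify $V$ with $Mat(n\times 1,\K)$ and take $J=(1,0,\dots,0)\in Mat(1\times n,\K)$. You in fact go further than the paper by explicitly verifying non-triviality via the computation $[e_1,e_2]_J=-e_2$, which the paper simply asserts without justification.
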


\begin{proof}
Let $n$ be the dimension of $V$. We identify $V$ with $Mat(n\times 1,\K)$ and put $J=(1,0,\dots,0)$, then
$(V,[~,~]_J)$ is a non trivial Lie algebra.
\end{proof}
Now we can give an analogue of the Ado's Theorem

\begin{thm}
Every finite-dimensional Lie algebra $\frak g$
over a field $\K$ of characteristic zero can be viewed as a Lie sub-algebra
of $\mathfrak{gl}(n,m,r,\K)$. More precisely
$\frak g$ has a linear representation $\rho$ over $\K$ on $\mathfrak{gl}(n,m,r,\K)$,
that is a faithful representation, making $\frak g$ isomorphic to a subalgebra of $(Hom(\K^n,\K^m),[~,~]_{J_{m,n,r}})$.
\end{thm}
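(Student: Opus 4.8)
The plan is to reduce to the classical theorem of Ado stated in the Introduction and then feed the resulting matrix representation into the corner-embedding Lemma of the present section (the Lemma asserting that a Lie subalgebra of $\mathfrak{gl}(n,\K)$ is again a Lie subalgebra of $\mathfrak{gl}(p,q,n,\K)$ for all $p,q\geq n$). First I would invoke Ado: since $\mathfrak g$ is finite-dimensional over a field $\K$ of characteristic zero, there is an integer $N\in\N^*$ and a faithful linear representation $\rho_0:\mathfrak g\longrightarrow\mathfrak{gl}(N,\K)$. Thus $\rho_0$ realizes $\mathfrak g$ as a Lie subalgebra $\mathfrak h:=\rho_0(\mathfrak g)$ of $\mathfrak{gl}(N,\K)$, and this is the only step where the hypothesis that $\K$ has characteristic zero enters.

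Next I would push $\mathfrak h$ into a rectangular-matrix Lie algebra. Fix any integers $m,n$ with $m,n\geq N$ and consider the linear injection
$$
\iota:\ Mat(N,\K)\longrightarrow Mat(n\times m,\K),\qquad
M\longmapsto\begin{pmatrix} M & 0_{N,m-N}\\ 0_{n-N,N}& 0_{n-N,m-N}\end{pmatrix},
$$
which places an $N\times N$ matrix in the top left block. Since $N\leq\min(m,n)$ the matrix $J_{m,n,N}\in Mat(m\times n,\K)$ is defined, and a direct block computation — exactly the content of the embedding Lemma above — gives $\iota(A)\,J_{m,n,N}\,\iota(B)=\iota(AB)$ for all $A,B\in Mat(N,\K)$, hence
$$
[\iota(A),\iota(B)]_{J_{m,n,N}}=\iota(AB)-\iota(BA)=\iota\bigl([A,B]\bigr).
$$
Therefore $\iota$ is an injective homomorphism of Lie algebras from $\mathfrak{gl}(N,\K)$ into $\mathfrak{gl}(n,m,N,\K)=\bigl(Mat(n\times m,\K),[~,~]_{J_{m,n,N}}\bigr)$, and $\iota(\mathfrak h)$ is a Lie subalgebra of $\mathfrak{gl}(n,m,N,\K)$ isomorphic to $\mathfrak h$.

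Finally I would set $\rho:=\iota\circ\rho_0$. As the composite of a faithful representation with an injective Lie algebra morphism, $\rho:\mathfrak g\longrightarrow\mathfrak{gl}(n,m,N,\K)$ is again faithful, and its image $\rho(\mathfrak g)=\iota(\mathfrak h)$ is a Lie subalgebra of $\mathfrak{gl}(n,m,N,\K)$ isomorphic to $\mathfrak g$; taking $r:=N$ this is precisely the assertion of the theorem. I do not expect a serious obstacle here: the argument is a soft reduction, and the only delicate points are the standard invocation of classical Ado and the bookkeeping of which matrix space each factor lives in — in particular that $J$ must be taken in $Mat(m\times n,\K)$ and that $\iota$ embeds into the top left corner, so that $[~,~]_{J_{m,n,N}}$ genuinely restricts to the ordinary commutator on the image. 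One pays for this simplicity by not obtaining a new proof of Ado itself; the substance of the paper lies rather in the preceding classification theorems, which show that the family $\mathfrak{gl}(n,m,r,\K)$ supplies strictly more models than $\mathfrak{gl}(n,\K)$ alone.
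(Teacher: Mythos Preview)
Your argument is correct and is exactly the paper's: invoke classical Ado to land $\mathfrak g$ faithfully in some $\mathfrak{gl}(N,\K)$, then use the corner-embedding Lemma to view $\mathfrak{gl}(N,\K)$ (hence $\mathfrak g$) inside $\mathfrak{gl}(n,m,N,\K)$ for any $n,m\geq N$. The paper's proof is the two-line version of what you wrote; your explicit block verification of $\iota(A)J_{m,n,N}\iota(B)=\iota(AB)$ simply unpacks that Lemma.
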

\begin{proof}
By Ado's Theorem, $\mathfrak g$ can be viewed as a
subalgebra of $\mathfrak {gl}(p,\K)$ and the last algebra is also a
subalgebra of $\mathfrak {gl}(n,m,q,\K)$ for any $q\geq p$ and $n,m\geq q$.
\end{proof}

\subsection{Examples}

\noindent (a) Let $V=\R^2$, we identify $V$ with $Mat(2\times 1,\R)$, we choose a basis $(e_1,e_2)$
in $V$ with $e_1=\begin{pmatrix}
  1 \\
  0
\end{pmatrix}, e_2=\begin{pmatrix}
  0 \\
  1
\end{pmatrix}$. Then we can check that $[e_2,e_1]_J=e_1$ with $J=(1,0)$. Then the
two-dimensional affine Lie algebra can be viewed as $(Hom(\R^2,\R), [,~~]_{J_{1,2,1}})$.

\noindent These constructions can be generalized for $V=\R^n$ with $n\geq 2$.

\noindent (b) Let $V=\R^3\simeq Mat(3\times 1,\R)$. Put $e_1=\begin{pmatrix}
    -1 \\
    0 \\
    0 \
  \end{pmatrix}, e_2=\begin{pmatrix}
    0 \\
    1 \\
    0 \
  \end{pmatrix}, e_3=\begin{pmatrix}
    0 \\
    0 \\
    1 \
  \end{pmatrix}$. Then we verify that $[e_1,e_2]_J=e_2,[e_1,e_3]_J=e_3, [e_2,e_3]_J=0$ where $J=(1,0,0)$. It
  is the Lie algebra $\mathfrak g_{3,2}(1)$ (see \cite{B} for notations).

\noindent (c) Let $V=\R^4$, then we can identify $V$ with $Mat(4\times 1,\R)$ and in this case
we take $J=(1,0,0,0)$ or with $Mat(2,\R)$ and in this case we can choose $J=I_2$ or $J=\begin{pmatrix}
  1 & 0 \\
  0 & 0
\end{pmatrix}$.

\section{Contractions and Extensions of Lie algebras}

Let $n, r\in \N$ with $ n\geq r$, $n\geq 2$ and put $J=J_{n,r}$,
then we have the following

\begin{prop}
The Lie algebra $\left(Mat(n,\K),[~,~]_{J_{n,r}}\right)$ is a contraction of
$\mathfrak {gl}(n,\K)$.
\end{prop}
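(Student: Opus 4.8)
The goal is to exhibit a one-parameter family of Lie algebra isomorphisms (or, more generally, linear automorphisms) $\phi_t \colon \mathfrak{gl}(n,\K) \to \mathfrak{gl}(n,\K)$ such that the brackets $[A,B]_t := \phi_t^{-1}[\phi_t A, \phi_t B]$ converge, as $t \to 0$ (or $t \to \infty$), to $[A,B]_{J_{n,r}} = A J_{n,r} B - B J_{n,r} A$. The natural candidate is a diagonal scaling adapted to the block decomposition $Mat(n,\K) = \bigl(\begin{smallmatrix} M_1 & M_3 \\ M_2 & M_4 \end{smallmatrix}\bigr)$ from Lemma~\ref{bracket}, where $M_1 \in Mat(r,\K)$ and $M_4 \in Mat(n-r,\K)$. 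Concretely, I would take $\phi_t$ to act as the identity on the $M_1$-block, by multiplication by $t$ on the $M_2$- and $M_3$-blocks, and by multiplication by $t^2$ on the $M_4$-block; equivalently, conjugation-type scaling $\phi_t(M) = D_t M$ followed by the ordinary commutator, where $D_t = \mathrm{diag}(I_r, t I_{n-r})$ acts appropriately. One then checks that each entry of the transformed bracket either is already a term of $[~,~]_{J_{n,r}}$ or carries a strictly positive power of $t$ and hence vanishes in the limit.

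\begin{proof}
Write every $M \in Mat(n,\K)$ in block form $M = \bigl(\begin{smallmatrix} M_1 & M_3 \\ M_2 & M_4\end{smallmatrix}\bigr)$ with $M_1 \in Mat(r,\K)$, $M_4 \in Mat(n-r,\K)$, and $M_2, M_3$ the off-diagonal blocks of the appropriate sizes. For $t \in \K^*$ define the linear map $\phi_t \colon Mat(n,\K) \to Mat(n,\K)$ by
$$
\phi_t\begin{pmatrix} M_1 & M_3 \\ M_2 & M_4 \end{pmatrix} = \begin{pmatrix} M_1 & t M_3 \\ t M_2 & t^2 M_4 \end{pmatrix}.
$$
Since $\phi_t$ multiplies each block by a nonzero scalar, it is a linear automorphism of $Mat(n,\K)$ with $\phi_t^{-1}$ given by the same formula with $t$ replaced by $t^{-1}$. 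Define the bracket $[A,B]_t = \phi_t^{-1}\bigl([\phi_t(A),\phi_t(B)]\bigr)$, which is a Lie bracket on $Mat(n,\K)$ isomorphic to $\mathfrak{gl}(n,\K)$ for every $t \neq 0$. Using the block multiplication rule for the ordinary commutator and the definition of $\phi_t$, one computes $[A,B]_t$ block by block: the $(1,1)$-block of $\phi_t(A)\phi_t(B) - \phi_t(B)\phi_t(A)$ equals $[A_1,B_1] + t^2(A_3 B_2 - B_3 A_2)$, and after applying $\phi_t^{-1}$ the $(1,1)$-block of $[A,B]_t$ is $[A_1,B_1] + t^2(A_3 B_2 - B_3 A_2)$; similarly the $(1,2)$-, $(2,1)$-, and $(2,2)$-blocks are, respectively, $A_1 B_3 - B_1 A_3 + t^2(\cdots)$, $A_2 B_1 - B_2 A_1 + t^2(\cdots)$, and $A_2 B_3 - B_2 A_3 + t^2(\cdots)$, where each $(\cdots)$ is a fixed bilinear expression in the blocks of $A$ and $B$ independent of $t$. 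Hence
$$
\lim_{t \to 0}[A,B]_t = \begin{pmatrix} [A_1,B_1] & A_1 B_3 - B_1 A_3 \\ A_2 B_1 - B_2 A_1 & A_2 B_3 - B_2 A_3 \end{pmatrix} = [A,B]_{J_{n,r}},
$$
the last equality being exactly Lemma~\ref{bracket}. Therefore $\bigl(Mat(n,\K),[~,~]_{J_{n,r}}\bigr)$ is the contraction of $\mathfrak{gl}(n,\K)$ along the family $(\phi_t)_{t \neq 0}$.
\end{proof}

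The routine part is the block bookkeeping for the commutator; the only point that needs care is checking that, after the renormalization by $\phi_t^{-1}$, every surviving $t$-dependent term carries a \emph{positive} power of $t$ (no negative powers appear), which is precisely why the weights $0,1,1,2$ on the four blocks are the right choice — they are additive under the block products in the sense that the product of a weight-$a$ block with a weight-$b$ block lands in a block of weight $\le a+b$. I expect the main (though still mild) obstacle to be organizing this weight computation cleanly across all four blocks simultaneously rather than any genuine conceptual difficulty; one should also remark that the limit is taken entrywise in $\K^{n^2}$, which requires $\K$ to carry a topology for the word ``contraction'' to be literally meaningful, or else one interprets it formally as the associated graded bracket with respect to the filtration induced by the block weights.
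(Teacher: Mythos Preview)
Your proof is correct and follows essentially the same approach as the paper: the paper's rescaled basis $E'_{i,j}$ (weights $1,\varepsilon,\varepsilon,\varepsilon^2$ according to whether $i,j\le r$ or $>r$) is exactly your map $\phi_t$ written entrywise rather than blockwise, and the paper's case-by-case computation of $[E'_{i,j},E'_{k,l}]$ is the basis-element version of your block calculation. Your organization via the block decomposition and the direct appeal to Lemma~\ref{bracket} for the limiting bracket is in fact tidier than the paper's enumeration of cases.
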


\begin{proof}
Let $(E_{i,j})$ the canonical basis of $Mat(n,\K)$, and define
$$
\frak g=Span\{E'_{i,j}, \quad i, j=1,\dots,n\} ,
$$
with
$$
E'_{i,j}=\left\{\aligned
E_{i,j}& \qquad \hbox { if } i \leq r\hbox { and } j\leq r,\\
\varepsilon E_{i,j}& \qquad \hbox { if } i>r  \hbox { and } j\leq r \hbox { or } j>r \hbox { and } i\leq r,\\
\varepsilon^2E_{i,j}& \qquad \hbox { if } i>r \hbox { and } j>r.
\endaligned\right.
$$
where $ \varepsilon\in \K$. Then we have

$$
[E'_{i,j},E'_{k,l}]= \left\{\aligned
\delta_{j,k}E'_{i,l}-\delta_{l,i}E'_{k,j}&\qquad \hbox { if }
i,j,k,l\leq r,\\
\delta_{j,k}E'_{i,l}&\qquad \hbox { if } i,j,
k\leq r \hbox { and } l>r,\\
-\delta_{l,i}E'_{k,j}            &\qquad \hbox { if } i,j,l\leq
r\quad \hbox {
and } k>r,\\
0&\qquad \hbox { if }
i,j\leq r\quad \hbox { and } k,l>r,\\
0&\qquad \hbox { if }
i,k\leq r\quad \hbox { and } l,j>r,\\
\delta_{j,k}\varepsilon^2E'_{i,l} &\qquad \hbox { if }
i\leq r\quad \hbox { and } j,k,l>r,\\
\delta_{j,k}\varepsilon^2E'_{i,l}-\delta_{l,i}\varepsilon^2E'_{k,j}&\qquad \hbox { if
} i,j,k,l>r.\endaligned\right.
$$
And when $\varepsilon\longrightarrow 0$, then we obtain
$$
\lim_{\varepsilon\rightarrow
0}[E'_{i,j},E'_{k,l}]=[E_{i,j},E_{k,l}]_{J_{n,r}}.
$$

\end{proof}
If we repeat again this contraction, we get a new algebra which can be directly
obtained from the first Lie algebra since $J_{n,r}J_{n,s}=J_{n,t}$ with $t=min(r,s)$.

\subsection{Example} Let $\mathfrak{sl}(2,\R)$ be the semi-simple
(real) Lie algebra spanned by $\{H,X,Y\}$ where
$$
H=\begin{pmatrix}
1&0\\
0&-1
\end{pmatrix},\quad X=\begin{pmatrix}
0&1\\
0&0
\end{pmatrix}
,\quad Y=\begin{pmatrix}
0&0\\
1&0
\end{pmatrix}.
$$
The (usual) brackets are:
$$
[H,X]=2X,\quad [H,Y]=-2Y,\quad [X,Y]=H.
$$
Pick $ J=\begin{pmatrix}
0&0\\
0&1
\end{pmatrix}
$, then we obtain the following brackets:
$$
[H,X]_J=X,\quad [H,Y]_J =-Y \quad [X,Y]_J=0.
$$
The new Lie algebra is completely solvable and thus non isomorphic to $\mathfrak{sl}(2,\R)$
since $\mathfrak{sl}(2,\R)$ is semi-simple.

Let us restrict ourself to the Lie algebra $\mathfrak {gl}(n,\mathbb{K})$.
Then we have the following

\begin{lem}(\cite{Y1}, \cite{Y2}) The map $(A,B)\longmapsto [A,B]_J$ is a two-coboundary for the
adjoint representation of $\mathfrak {gl}(n,\K)$.
\end{lem}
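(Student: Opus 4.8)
The plan is to realize the bilinear map $(A,B)\mapsto[A,B]_J$ explicitly as the Chevalley--Eilenberg coboundary of a single $1$-cochain. Recall first that for the adjoint representation of $\mathfrak{gl}(n,\K)$ a $1$-cochain is simply a $\K$-linear map $\varphi\colon\mathfrak{gl}(n,\K)\longrightarrow\mathfrak{gl}(n,\K)$, and its coboundary is the alternating bilinear map
$$
(\partial\varphi)(A,B)=[A,\varphi(B)]-[B,\varphi(A)]-\varphi\bigl([A,B]\bigr),
$$
where here $[\,\cdot\,,\,\cdot\,]$ denotes the ordinary commutator $AB-BA$ of $\mathfrak{gl}(n,\K)$. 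Since $[\,\cdot\,,\,\cdot\,]_J$ is visibly alternating, it therefore suffices to exhibit one $\varphi$ with $\partial\varphi=[\,\cdot\,,\,\cdot\,]_J$.

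First I would take for $\varphi$ right multiplication by the fixed matrix $J$, that is $\varphi(A)=AJ$; this is manifestly $\K$-linear, hence a legitimate $1$-cochain. The only real step is then the short computation
$$
\begin{aligned}
(\partial\varphi)(A,B)&=[A,BJ]-[B,AJ]-(AB-BA)J\\
&=\bigl(ABJ-BJA\bigr)-\bigl(BAJ-AJB\bigr)-\bigl(ABJ-BAJ\bigr).
\end{aligned}
$$
The terms $\pm ABJ$ cancel and the terms $\mp BAJ$ cancel, so what remains is exactly $AJB-BJA=[A,B]_J$. Hence $[\,\cdot\,,\,\cdot\,]_J=\partial\varphi$ is a $2$-coboundary for the adjoint representation of $\mathfrak{gl}(n,\K)$, which is the assertion.

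I do not expect any genuine obstacle; the single point requiring care is the sign convention for $\partial$. With the convention above one may equally well take $\varphi(A)=JA$ (left multiplication by $J$), the two choices of $1$-cochain differing by the map $A\mapsto AJ-JA=[A,J]$, which is itself the coboundary of the $0$-cochain $J$, so that $\partial\varphi$ is independent of the choice. Structurally this reflects the fact that the $J$-twisted associative product $u\cdot_J v=u\cdot J\cdot v$ differs from $u\cdot v$ by a cohomologically trivial modification of the induced Lie bracket; but the one-line verification with $\varphi(A)=AJ$ is all that is actually needed here, and it reproduces the statement of \cite{Y1}, \cite{Y2}.
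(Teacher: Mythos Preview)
Your argument is correct and follows exactly the same route as the paper: exhibit an explicit $1$-cochain and verify by direct expansion that its Chevalley--Eilenberg coboundary equals $[\,\cdot\,,\,\cdot\,]_J$. The only cosmetic difference is that the paper chooses the symmetrized cochain $\alpha(X)=\tfrac{1}{2}(XJ+JX)$ rather than your $\varphi(X)=XJ$; as you yourself observe, any two such choices differ by the coboundary of a $0$-cochain (here $\tfrac{1}{2}J$), so this is immaterial.
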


\begin{proof}
We can easily check that
$$
[A,B]_J=ad_A\alpha(B)-ad_B\alpha(A)-\alpha([A,B])=(d\alpha)(A,B).
$$
where
$$
\alpha(X)=\frac{1}{2}(XJ+JX).
$$
\end{proof}
Let $t\in[0,1]$ and $J=J_{n,r}$ (with $r<n$), then we can write
$$
[A,B]_{(1-t)I+tJ}=[A,B]+t[A,B]_{J-I},
$$
thus since $[A,B]_{J-I}$ is a coboundary, $t\longmapsto[A,B]_{(1-t)I+tJ}$ is a deformation infinitesimally trivial at
$t=0$ from $\mathfrak {gl}(n,\K)$ to $\mathfrak{gl}(n,r,\K)$. For any $t<1$, the Lie algebra
$\left(Mat(n,\K),[~,~]_{(1-t)I+tJ}\right)$ is isomorphic to $\mathfrak {gl}(n,\K)$ but for $t=1$ we get a completely different structure which is labelled by the bracket $[~,~]_J$.
The mapping
$$
\Psi_t:\left(Mat(n,\K),[~,~]_{(1-t)I+tJ}\right)\longrightarrow \mathfrak {gl}(n,\K),\quad
\left(
  \begin{array}{cc}
   X_1& X_2 \\
   X_3& X_4 \\
  \end{array}
\right)\longmapsto\left(
  \begin{array}{cc}
   X_1& (1-t)X_2 \\
   X_3& (1-t)X_4 \\
  \end{array}
\right)
$$
is for any $t\in [0,1[$  invertible and $
[X,Y]_{(1-t)I+tJ}=\displaystyle{\Psi_t^{-1}\left([\Psi_t(X),\Psi_t(Y)]\right)},
$
and we have $\frac{\partial}{\partial t}([~,~]_{(1-t)I+tJ})_{| t=0 }$ is the 2-coboundary:
$(A,B)\mapsto [A,B]_{J-I}.$
\vskip 0.5cm
\begin{lem}
The vector space $Mat(n,\K)$ is a $\left(Mat(n,\K),[~,~]_J\right)$-module.
\end{lem}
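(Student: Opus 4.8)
The plan is to exhibit an explicit linear action of the Lie algebra $\bigl(Mat(n,\K),[~,~]_J\bigr)$ on the vector space $Mat(n,\K)$ and then verify the module axiom. The natural candidate comes from the associative product $u\circ_J v=uJv$ introduced in the Introduction: viewing $Mat(n,\K)$ as the regular left module over the associative algebra $(Mat(n,\K),\circ_J)$, the induced map on the associated Lie algebra should be a representation. Concretely, I would define
$$
\rho:\ Mat(n,\K)\longrightarrow \mathfrak{gl}\bigl(Mat(n,\K)\bigr),\qquad \rho(A)(X)=AJX,
$$
and observe first that $\rho(A)$ is a linear endomorphism of $Mat(n,\K)$ for each fixed $A$, and that $A\longmapsto \rho(A)$ is itself $\K$-linear, so that it only remains to check compatibility with brackets.

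The core step is the identity $\rho\bigl([A,B]_J\bigr)=\rho(A)\rho(B)-\rho(B)\rho(A)$ in $\mathfrak{gl}\bigl(Mat(n,\K)\bigr)$. Evaluating the right-hand side on an arbitrary $X\in Mat(n,\K)$ and using only associativity of ordinary matrix multiplication,
$$
\rho(A)\rho(B)(X)-\rho(B)\rho(A)(X)=AJ(BJX)-BJ(AJX)=(AJB-BJA)JX=[A,B]_J\,JX=\rho\bigl([A,B]_J\bigr)(X).
$$
Hence $\rho$ is a homomorphism of Lie algebras, and $Mat(n,\K)$ equipped with the action $A\cdot X=AJX$ is a $\bigl(Mat(n,\K),[~,~]_J\bigr)$-module, as claimed.

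I expect no genuine obstacle here: the statement is essentially the general fact that every left module over an associative algebra is a module over the associated Lie algebra, specialized to the product $u\circ_J v=uJv$, so the only thing to spell out is the one-line bracket computation above. If desired, one can record the alternative module structures $A\cdot X=-XJA$ (regular right action) or $A\cdot X=[A,X]_J$ (the adjoint module); each satisfies the same verification, and I would single out the left-multiplication action $A\cdot X=AJX$ as the canonical choice, being the one that interfaces with the deformation and extension picture developed in the preceding lemmas.
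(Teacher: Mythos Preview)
Your argument is correct, but the paper chooses a different module structure: it takes the adjoint action $\varphi(A)=ad_A^J$, $ad_A^J(B)=[A,B]_J$, and remarks that $\varphi$ is a Lie algebra homomorphism because $[~,~]_J$ satisfies the Jacobi identity. You instead use the left regular action $\rho(A)(X)=AJX$ coming from the associative product $\circ_J$, and verify the bracket relation directly from associativity of matrix multiplication. Both are one-line checks; your route has the minor advantage of not invoking Jacobi (only associativity), and it makes the associative-algebra origin of the module explicit, while the paper's choice is the canonical module attached to any Lie algebra and requires no auxiliary structure. You even list the adjoint action among your alternatives, so you have in fact covered the paper's proof as a special case.
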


\begin{proof}
We can verify that the map
$$
\aligned
\varphi:\left(Mat(n,\K),[~,~]_J\right)&\longrightarrow \mathfrak {gl}\left((Mat(n,\K)\right)\\
                        A&\longmapsto ad_A^J,
\endaligned
$$
where
$$
ad_A^J(B)=[A,B]_J,
$$
is a Lie algebra homomorphism since $[~,~]_J$ is a Lie bracket on
$Mat(n,\K)$.
\end{proof}

Let $\mathfrak{g}$ be a Lie algebra over $\mathbb{K}$ and
$(X_1,\dots,X_r)$ with the following commutations roles
$[X_i,X_j]=\displaystyle{\sum_kc_{i,j}^kX_k}.$ By the Ado's
theorem, there is a faithful representation $\rho$ of $\mathfrak
g$ in $\mathfrak{gl}(n,\K)$ for some integer $n$. Let us identify
$\mathfrak g$ with its image under $\rho$. Then we have the
following

\begin{prop}
Let $\frak{h}= Mat(r\times (n-r),\K)$, $\frak{h}'= Mat((n-r)\times
r,\K)$ and  $\frak{h}"= Mat(n-r,\K)$. Then we have the following
subalgebras inclusions
$$
\begin{pmatrix}
\mathfrak g&\mathfrak{h}\\
0&0
\end{pmatrix}\subset \begin{pmatrix}
\mathfrak g&\mathfrak{h}\\
0&\mathfrak{h}"
\end{pmatrix}\subset
\mathfrak{gl}(n,r,\mathbb{K})
$$
and
$$
\begin{pmatrix}
\mathfrak g&0\\
\mathfrak{h}'&0
\end{pmatrix}\subset
\begin{pmatrix}
\mathfrak g&0\\
\mathfrak{h}'&\mathfrak{h}"
\end{pmatrix}\subset\mathfrak{gl}(n,r,\mathbb{K}).
$$

\end{prop}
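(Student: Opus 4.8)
The plan is to notice first that every inclusion appearing in the statement is trivial at the level of vector spaces: in each of the two chains the set on the left is obtained from the one on its right by forcing one more block to vanish, and the rightmost set in each chain is a linear subspace of $Mat(n,\K)$. So the whole content of the proposition is that the four intermediate subspaces
$$
\begin{pmatrix}\mathfrak g&\mathfrak{h}\\0&0\end{pmatrix},\quad
\begin{pmatrix}\mathfrak g&\mathfrak{h}\\0&\mathfrak{h}''\end{pmatrix},\quad
\begin{pmatrix}\mathfrak g&0\\\mathfrak{h}'&0\end{pmatrix},\quad
\begin{pmatrix}\mathfrak g&0\\\mathfrak{h}'&\mathfrak{h}''\end{pmatrix}
$$
are each closed under the bracket $[~,~]_{J_{n,r}}$ of $\mathfrak{gl}(n,r,\K)$; I would establish this by a direct block computation resting on Lemma \ref{bracket}.

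Concretely, writing $A=\begin{pmatrix}A_1&A_3\\A_2&A_4\end{pmatrix}$ and $B=\begin{pmatrix}B_1&B_3\\B_2&B_4\end{pmatrix}$ with $A_1,B_1\in Mat(r,\K)$, $A_2,B_2\in Mat((n-r)\times r,\K)$, $A_3,B_3\in Mat(r\times(n-r),\K)$ and $A_4,B_4\in Mat(n-r,\K)$, Lemma \ref{bracket} gives the four corners of $[A,B]_{J_{n,r}}$ as $[A_1,B_1]$, $A_1B_3-B_1A_3$, $A_2B_1-B_2A_1$ and $A_2B_3-B_2A_3$. The point I would stress is that $A_4$ and $B_4$ do not occur anywhere in this formula, so the bottom-right corner $\mathfrak{h}''$ is central in $\mathfrak{gl}(n,r,\K)$ (in agreement with Proposition \ref{center}) and may be adjoined without affecting closure. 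For the first chain I would take $A_2=B_2=0$: then both bottom entries of the bracket vanish, the top-left entry lies in $[\mathfrak g,\mathfrak g]\subseteq\mathfrak g$, and the top-right entry lies in $Mat(r\times(n-r),\K)=\mathfrak{h}$, so each of the first two subspaces is a Lie subalgebra. The second chain is handled symmetrically by taking $A_3=B_3=0$ and using $Mat((n-r)\times r,\K)=\mathfrak{h}'$. Finally, since the four subspaces are contained in $Mat(n,\K)$ and closed under $[~,~]_{J_{n,r}}$, they are Lie subalgebras of $\mathfrak{gl}(n,r,\K)$, and both chains of inclusions follow at once.

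I do not expect a real obstacle here. Granted Lemma \ref{bracket}, each case is a one-line matrix identity, and the only care needed is bookkeeping the block sizes $r$ and $n-r$ so that the products $A_1B_3$, $A_2B_1$ and the like land in the intended spaces $\mathfrak{h}$ and $\mathfrak{h}'$. The one conceptual remark worth spelling out — and the reason the statement is true at all — is that $J_{n,r}$ annihilates the last $n-r$ rows and columns, which is exactly why the $\mathfrak{h}''$ block drops out of every bracket and can be freely appended to the upper- and lower-triangular families.
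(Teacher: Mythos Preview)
Your argument is correct and is exactly the direct verification one would expect: invoke Lemma~\ref{bracket}, observe that the $A_4,B_4$ blocks never appear in $[A,B]_{J_{n,r}}$, and read off that the upper-triangular (resp.\ lower-triangular) block subspaces are closed because $[A_1,B_1]\in\mathfrak g$ and the off-diagonal product lands in $\mathfrak h$ (resp.\ $\mathfrak h'$). The paper in fact states this proposition without proof, so there is nothing to compare against; your write-up supplies precisely the routine check the authors omitted. One small clarification you might add for the reader: the setup just before the proposition embeds $\mathfrak g$ via Ado's theorem, and for the block picture to make sense one must read this as $\mathfrak g\subset\mathfrak{gl}(r,\K)$ (so that the top-left $r\times r$ corner can carry $\mathfrak g$), with $n\geq r$ then chosen freely; your proof implicitly uses this, and it is worth saying so explicitly since the paper's phrasing is slightly ambiguous.
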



\begin{thebibliography}{99}

\bibitem{B} {\sc Bernat P. et al.}, {\it Repr\'esentations des
groupes de Lie r\'esolubles}, { Monographes de la soci\'et\'e math\'ematique
de France}, Dunod Paris (1972).


\bibitem{Dix} {\sc Dixmier. J.}, {\it Alg\`ebres
enveloppantes}, { Gautier-Villars (1974) ISBN 2-04-008055-4}


\bibitem{IT} {\sc Ise and Takeuchi}, {\it Lie groups I, Lie groups II},
{ Translations of Mathematical Monographs, Volume {\bf 85} ISBN 0-8218-4544-6}



\bibitem{Y1} {\sc Yanovski A. B.},  {\it Linear Bundles of Lie Brackets and their Applications},
{ J. Math Phys. {\bf 41} (2000) 7869-7882.}

\bibitem{Y2} {\sc Yanovski A. B.},  {\it Compatible Poisson Tensors Related To Bundles of Linear Algebras},
{ Proceedings of the 7th international conference on geometry,
integrability and quantization, Sts. Constantine and Elena,
Bulgaria, June 2--10, 2005. Sofia: Bulgarian Academy of Sciences.
307-319 (2006)}

\bibitem{H} {\sc Hochsield G. and Serre J. P.}, {\it
 Cohomologie of Lie algebras}, {Annales of Math. t. 57, 1953, p. 591-603.}


\end{thebibliography}
\end{document}